\newcommand{\rmnum}[1]{\romannumeral #1}
\newcommand{\Rmnum}[1]{\expandafter\@slowromancap\romannumeral #1@}
\newtheorem{theorem}{Theorem}
\newtheorem{lemma}{Lemma}
\newtheorem{proposition}{Proposition}
\newdefinition{remark}{Remark}
\newproof{proof}{Proof}
\newtheorem{lemmaA}{Lemma}
\journal{Applied Mathematics and Computation}
\begin{document}

\begin{frontmatter}

\title{Exponential Runge-Kutta Galerkin finite element method for a reaction-diffusion system with nonsmooth initial data}

\author{Runjie Zhang} \ead{2112314030@mail2.gdut.edu.cn}
\author{Shuo Yang} \ead{2112414027@mail2.gdut.edu.cn}
\author{Jinwei Fang\corref{cor}} \ead{fangjinwei@gdut.edu.cn} \cortext[cor]{Corresponding author}

\address{School of  Mathematics and Statistics, Guangdong University of Technology, Guangzhou, Guangdong 510006, PR China}

\begin{abstract}
This study presents a numerical analysis of the Field–Noyes reaction–diffusion model with nonsmooth initial data, employing a linear Galerkin finite element method for spatial discretization and a second-order exponential Runge–Kutta scheme for temporal integration. The initial data are assumed to reside in the fractional Sobolev space \(H^{\gamma}(\Omega)\) with \(0 < \gamma < 2\), where classical regularity conditions are violated, necessitating specialized error analysis.  By integrating semigroup techniques and fractional Sobolev space theory, sharp fully discrete error estimates are derived in both \(L^2\) and \(H^1\) norms. This demonstrates that the convergence order adapts to the smoothness of initial data, a key advancement over traditional approaches that assume higher regularity. Numerical examples are provided to support the theoretical analysis. 
\end{abstract}

\begin{keyword}
  Filed-Noyes model\sep  
  Reaction–diffusion problem\sep 
  Nonsmooth initial data\sep 
  Galerkin finite element\sep 
  Exponential Runge-Kutta

  \MSC[2008] 35K58\sep 65J08\sep 65M15\sep 65M60
\end{keyword}

\end{frontmatter}

\section{Introduction}

Reaction-diffusion systems are widely employed to characterize spatiotemporal pattern formation in chemical and biological systems. A paradigmatic example is the Belousov–Zhabotinsky (BZ) reaction, which involves over ten elementary chemical reactions occurring concurrently and exhibits a persistent deviation from chemical equilibrium. This system is recognized as a canonical manifestation of self-organization phenomena in chemistry. To mathematically formalize this complex behavior, Field and Noyes proposed a simplified mathematical model from a macroscopic perspective in their seminal work \cite{FN74b}. In the present study, we investigate the initial-boundary value problem for the Field–Noyes model:
\begin{equation}\label{eq:eq1}
    \left\{\;
        \begin{aligned} 
        &\frac{\partial u_1}{\partial t}=a_1\Delta u_1+\lambda^{-1}(\rho u_3-u_1u_3+u_1-u_1^2),\quad &&\text{in}\,\,\Omega\times(0,T],\\
        &\frac{\partial u_2}{\partial t}=a_2\Delta u_2+u_1-u_2,\quad &&\text{in}\,\,\Omega\times(0,T],\\
        &\frac{\partial u_3}{\partial t}=a_3\Delta u_3+\delta^{-1}(-\rho u_3-u_1 u_3+cu_2),\quad &&\text{in}\,\,\Omega\times(0,T],\\
        &\frac{\partial u_1}{\partial n(x)}=\frac{\partial u_2}{\partial n(x)}=\frac{\partial u_3}{\partial n(x)}=0,\quad &&\text{on}\,\,\partial\Omega\times(0,T],\\
        &u_1(x,0)=u_{1,0}(x),\,u_2(x,0)=u_{2,0}(x),\,u_3(x,0)=u_{3,0}(x),\quad &&\text{in}\,\,\Omega,
        \end{aligned}
    \right.
\end{equation}
where $\Omega\in \mathbb{R}^d$, $d = 1, 2, 3$ is a convex polygonal domain with boundary $\partial\Omega$. 
Here, $u_1(x,t)$, $u_2(x,t)$, $u_3(x,t)$ denote the concentration of chemical substances in the domain $\Omega$. The parameters $\delta$, $\lambda$, $\rho$ and $c$ represent positive constants associated with chemical properties, while $a_1$, $a_2$, $a_3$ signify positive diffusion coefficients. The well-posedness of this problem was investigated in \cite[Chapter 10]{ABsPEE}, and relevant results will be discussed in Section \ref{sec:pre}. This paper is devoted to the numerical analysis of this semilinear parabolic problem with \emph{nonsmooth initial data}.

In recent decades, exponential integrators have emerged as highly efficient schemes for the temporal discretization of parabolic equations. These methods effectively address stiffness by incorporating precise treatment of stiff terms within the numerical framework. Driven by advancements in computational algorithms, substantial research efforts have been devoted to the development of exponential integrators for semilinear parabolic problems. A variety of such integrators have been explored, including exponential Runge–Kutta methods \cite{ERK1,ERK2,luan14stiff}, exponential multistep schemes \cite{EMult1,EMult2}, and exponential Rosenbrock methods \cite{ERB1,ERB2}, among others. Comprehensive overviews of exponential integration techniques can be found in \cite{EI_hochbruck,EI_review}. This study utilizes a second-order exponential Runge–Kutta method for temporal discretization.

To facilitate the error estimation for discrete methods, it is customary to impose certain boundedness assumptions on the derivatives of the solution and the nonlinear term of the semilinear parabolic problem. Such assumptions hold when the initial data and nonlinear term are sufficiently smooth and satisfy appropriate compatibility conditions.
However, these boundedness properties are not satisfied for nonsmooth initial data, rendering conventional error estimates inapplicable. This necessitates a specialized error analysis for nonsmooth cases.

Considerable research has been devoted to the nonsmooth error analysis of \emph{abstract} semilinear parabolic equations, encompassing both spatial and temporal discretization.
For spatial discretization, error estimates for Galerkin approximations are well-documented, with comprehensive results synthesized in the monograph \cite{GaleFEM}. Concerning temporal discretization, diverse schemes have been investigated, including fully-implicit, semi-implicit, exponential Rosenbrock, and implicit-explicit methods \cite{IRK1,IRK3,IRK2,mukam,24IMEX}. These studies have demonstrated the phenomenon of order reduction in time-discrete methods under nonsmooth initial conditions.

Furthermore, rigorous nonsmooth error analysis has been developed for \emph{specific} problems. For the Navier–Stokes equations, a substantial body of research exists, as synthesized in \cite{21NSH1}. For instance, \cite{08He} proved first-order convergence for the Euler implicit/explicit scheme, while \cite{10He} established suboptimal $1.5$-order convergence using a Crank–Nicolson/Adams–Bashforth scheme, both requiring $H^1$ initial data. Under $L^2$ initial conditions, first-order convergence of a variable-stepsize semi-implicit method was demonstrated in \cite{22Li_L2}. Additionally, \cite{18_Burges} extended the analysis of the Crank–Nicolson/Adams–Bashforth scheme to the Burgers equation, establishing $1.5$-order convergence under $H^1$ initial conditions. These error estimates are derived via energy method techniques.
Herein, distinct methodologies are employed for the reaction-diffusion equation (\ref{eq:eq1}) to extend these investigations and derive a broader class of error analysis results.

In this article, a linear finite element discretization with mesh size $h$ is employed for spatial approximation, and a second-order exponential Runge–Kutta scheme with time step $\Delta t$ is adopted for temporal discretization  of the Field–Noyes model (\ref{eq:eq1}). The initial data are assumed to reside in the fractional Sobolev space $H^\gamma(\Omega)$, where $0 < \gamma < 2$. This assumption on the fractional Sobolev regularity of the initial conditions is imposed to precisely characterize their smoothness properties. By employing semigroup techniques in the error analysis, the following error bounds for the fully discrete scheme are derived:  
\begin{equation}\label{full_err}
	\begin{aligned}
		\| u(t_n) - u^h_n \|_{L^2(\Omega)} &\leq C t_n^{-1+\gamma/2} h^2 + C t_n^{q/2} \Delta t^{\min(1+\gamma-\varepsilon,\,2)}, \quad 0 < t_n \leq T, \\
		\| u(t_n) - u^h_n \|_{H^1(\Omega)} &\leq C t_n^{-1+\gamma/2} h + C t_n^{q/2-1/2} \Delta t^{\min(1+\gamma-\varepsilon,\,2)}, \quad 0 < t_n \leq T,
	\end{aligned}
\end{equation}  
where $u = (u_1, u_2, u_3)$, $u^h_n$ denotes the numerical solution at time $t = t_n$, $q$ is defined in \eqref{def_q}, and $\varepsilon$ is an arbitrarily small positive number. These estimates are shown to be sharp, as validated by the numerical experiments in Section \ref{sec:experiments}. Moreover, as demonstrated in \cite{EMSV_high_order,21NSH1}, variable time-stepping strategies can mitigate order reduction phenomena, and sharper error bounds permit coarser time grids without order degradation.  

The main contributions of this work, in contrast to prior investigations,  are outlined as follows:  

\begin{enumerate}
	\item  Theoretical frameworks in existing literature \cite{IRK1,IRK3,IRK2,24IMEX,mukam} impose relatively weak assumptions on equation nonlinearity, which are insufficient to derive sharp convergence rates for problem \eqref{eq:eq1}. Notably, these frameworks fail to encompass certain low-regularity scenarios. By contrast, our approach directly addresses the nonlinear term of \eqref{eq:eq1}, yielding sharp error bounds that hold for a broader class of initial data distributions.
	
	\item Whereas previous investigations \cite{22Li_L2,10He,08He} focused on temporal discretization errors for equations with initial data in integer-order Sobolev spaces ($L^2$, $H^1$, and $H^2$), the present study analyses the reaction–diffusion system (\ref{eq:eq1}) with initial data in the fractional Sobolev space $H^{\gamma}$ ($0 < \gamma < 2$). A key finding is the demonstration that the convergence order (\ref{full_err}) exhibits continuous dependence on the regularity parameter $\gamma$, establishing a novel connection between solution smoothness and numerical accuracy.
	
	\item The analysis of the three-dimensional case, when combined with the fractional regularity of initial data, presents a primary technical challenge. To address this, we develop novel theoretical tools for nonlinear term treatment and refine analytical techniques, enabling rigorous error characterization in regimes where prior methods were inapplicable.
	
\end{enumerate}

The paper is structured as follows: Section \ref{sec:pre} introduces the notational conventions, presents fundamental results for the Field–Noyes model, and establishes a key technical lemma that underlies subsequent analyses. Subsequently, Section \ref{sec:spadis_erran} develops the spatial discretization framework, where critical estimates \eqref{localLips_discrete}–\eqref{AhDuufh} are derived by leveraging the specific structure of the nonlinear term. This section further establishes spatial discretisation errors and associated estimates for the solution and nonlinear term of \eqref{eq2}, providing the foundational analytical tools for the subsequent temporal analysis. Section \ref{sec:fuldis_erran} then focuses on the temporal discretisation error analysis for the semidiscrete scheme \eqref{eq2}, building upon the spatial estimates to derive comprehensive error bounds. Finally, Section \ref{sec:experiments} presents numerical experiments that validate the theoretical findings, with concluding remarks summarised in Section \ref{sec:conclusions}.

\section{Preliminaries}
\label{sec:pre}
We begin by introducing some notations. 
For $s\geq 0$, we denote by $\|\cdot\|_s$ the norm of the Sobolev spaces $H^s=H^s(\Omega)$ over the domain $\Omega$ (see, e.g., \cite{ABsPEE, multiplication21}). For $s=0$, we identify  $H^0(\Omega)$ with $L^2(\Omega)$ equipped with the norm  $\|\cdot\|$ and inner product $(\cdot,\cdot)$. The $L^\infty$ space consists of all bounded measurable functions in $\Omega$. 
For $s\geq 1$, the space $H_N^s=H_N^s(\Omega)$ denotes the Sobolev space $H^s$ subject to homogeneous Neumann boundary conditions.  
For convenience, we denote by $C$ a generic positive constant and by $\varepsilon$ a sufficiently small positive number, both of which may vary across instances.

\subsection{Well-posedness of the problem}
This subsection presents existence, uniqueness and regularity results for the Field-Noyes model.   Further details can be found in \cite[Chapter 10]{ABsPEE}.

Let $X=(L^2)^3$ be the underlying space. 
The space $\mathcal{L}(X)$ denote the set of bounded linear operators from $X$ to $X$ with operator norm $\Vert\cdot\Vert_{\mathcal{L}(X)}$. For simplicity, $\|\cdot\|_X$, $(u,v)_X$ and $\|\cdot\|_{(H^s)^3}$ are denoted as $\|\cdot\|$, $(\cdot,\cdot)$ and $\|\cdot\|_s$ respectively. 
Next, we introduce the linear operator 
\begin{equation*}
	A=\text{diag}\{A_1,A_2,A_3\},
\end{equation*}
where $A_1,A_2$ and $A_3$ are realizations of $a_1\Delta-\lambda^{-1}I$, $a_2\Delta-I$ and $a_3\Delta-\delta^{-1}\rho\,I$, respectively, in $L^2$ under the homogeneous Neumann boundary conditions on $\partial\Omega$. 
Consequently, $A$ is a negative definite self-adjoint operator on $X$ and serves as the infinitesimal generator of the analytic semigroup $S(t)=\text{e}^{tA}$ on $X$. The domains of its fractional powers are specified as
\begin{equation*}
    \left\{\;
    \begin{aligned}
        &D(A^\theta)=({H}^{2\theta})^3,\qquad \text{if}\:\: 0\leq\theta<\frac{3}{4},\\
        &D(A^\theta)=({H}_N^{2\theta})^3,\qquad \text{if}\:\:\frac{3}{4}<\theta\leq 1,
    \end{aligned}\right.
\end{equation*}
with norm equivalence 
\begin{align}
    C^{-1}\Vert u\Vert_{2\theta}\leq\Vert A^\theta u\Vert \leq C\Vert u\Vert _{2\theta}, \:\:u\in D(A^\theta).\label{ONEH}
\end{align}
Subsequently, we define the nonlinear operator $f:D(A^\theta)\to X$ as follows:
\begin{equation}
    f(u)=
    \begin{pmatrix}
        \lambda^{-1}(\rho u_3-u_1u_3+2u_1-u_1^2)\\
        u_1\\
        \delta^{-1}(-u_1u_3+cu_2)
    \end{pmatrix},\label{def_f}
\end{equation}
where $\frac{d}{8}<\theta<1$. Additionally, $f$ can also be viewed as a mapping from $L^\infty\to L^\infty$. 

Then, we can reformulate the Field-Noyes model \eqref{eq:eq1} as an abstract evolution equation in $X$:
\begin{equation}
    \left\{\;
        \begin{aligned} 
        &\frac{du(t)}{dt}=Au(t)+f(u(t)), \quad t\in (0,T],\\
        &u(0)=u_0 \in D(A^{\gamma/2}).
        \end{aligned}
    \right.\label{eq2}
\end{equation}
This equation admits a unique global solution in the function space:
\begin{equation*}
	u\in \mathcal{C}((0,T];D(A))\cap \mathcal{C}([0,T];D(A^{\gamma/2}))\cap \mathcal{C}^1((0,T];X).
\end{equation*} 
It also holds that
\begin{align}
     \|A^{s/2}u(t)\|\leq C&t^{{\gamma}/2-s/2}+C, \quad  0\leq s\leq 2, \label{bound_u}\\
     \Vert f(u(t)) \Vert&\leq Ct^{-1+{\gamma}/2},\label{f_L2bou}
\end{align}
where $C$ depend on $u_0$, $\Omega$ and $T$. 
It is well-known that the solution of \eqref{eq2} can be expressed by the variation of constants formula
\begin{align}
    u(t)=S(t)u_0+\int_{0}^{t}S(t-\tau)f(u(\tau))\text{d}\tau,\qquad t\in[0,T].\label{VoCF1}
\end{align}

\subsection{Lipschitz condition for the nonlinear term}
First, we introduce the set 
\begin{equation}\label{Set_B}
    \begin{aligned}
        \mathcal{B}=&\left\{(s,s_1,s_2)\mid -\frac{3}{2}<s\leq s_i<\frac32\:\: \text{for}\:\: i=1,2,\:\:\:\right.  \\
        &\quad \left. s_1+s_2> \frac d2+s  ,\:\:\:s_1+s_2>0, \:\: s_1>0 \:\right\},
    \end{aligned}
\end{equation}
which will be frequently used. Here $s_1$ is positive and represent the boundedness of the solution. And it is desirable that $s_1$ be as large as possible. 
Then we present a important lemma that effectively controls the nonlinear term $f(u)$ and its Fréchet derivatives. 
\begin{lemma}\label{le:pointmult}
    Let $(s,s_1,s_2)\in \mathcal{B}$. Given $u\in D(A^{s_1/2})$, $v\in D(A^{s_2/2})\cap L^2$ and $uv\in L^2$, we have
  \begin{align*}
      \Vert A^{s/2}(u v)\Vert\leq C\Vert A^{s_1/2}u\Vert \Vert A^{s_2/2} v\Vert,
  \end{align*}
  where $C$ is independent of $u$ and $v$.
\end{lemma}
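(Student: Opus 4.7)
The plan is to reduce the estimate, via the norm equivalence \eqref{ONEH}, to the classical pointwise multiplication inequality
\[
\|uv\|_{s} \leq C\|u\|_{s_1}\|v\|_{s_2} \qquad \text{for } (s, s_1, s_2) \in \mathcal{B}
\]
in fractional Sobolev spaces on $\Omega$, which is a standard result of the type surveyed in \cite{multiplication21}. Since $s_1, s_2 < 3/2$ forces $s_i/2 < 3/4$, \eqref{ONEH} equates $\|A^{s_i/2}\cdot\|$ with $\|\cdot\|_{s_i}$ on $(H^{s_i})^3$ without any Neumann-trace caveat; the same holds for $\|A^{s/2}(uv)\|$ whenever $s\ge 0$. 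When $s<0$ (permitted by $\mathcal{B}$), I would interpret $\|A^{s/2}(uv)\|$ through $L^2$-duality: as $A$ is self-adjoint and negative-definite, $A^{s/2}$ extends to a bounded operator on $L^2$, and
\[
\|A^{s/2}(uv)\| \;=\; \sup_{\|\varphi\|=1}\bigl(uv,\,A^{s/2}\varphi\bigr);
\]
since $A^{s/2}\varphi$ sweeps out $D(A^{-s/2})=(H^{-s})^3$ (with $-s<3/2$), the claim once again reduces to the scalar Sobolev multiplication estimate, now read in the dual sense.

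Next, I would prove the scalar estimate by splitting on the sign of $s$. For $s\ge 0$, if $s_1>d/2$ the bound is immediate from $H^{s_1}\hookrightarrow L^\infty$ together with $s_2\ge s$. Otherwise ($s_1\le d/2$) I would use Sobolev embeddings $H^{s_i}\hookrightarrow L^{p_i}$ with $1/p_i = 1/2-(s_i-\alpha)/d$ for a small $\alpha>0$ and close the estimate via Hölder or a fractional Leibniz rule; the strict inequality $s_1+s_2>d/2+s$ in $\mathcal{B}$ provides exactly the slack needed to select admissible $p_i$. For $s<0$ I would dualize: writing $(uv,\varphi)=(v,u\varphi)$ for a test function $\varphi\in H^{-s}$ reduces the task to bounding $\|u\varphi\|_{-s_2}$, i.e.\ to applying the already-proved nonnegative case to the shifted triple $(-s_2,\,s_1,\,-s)$. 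A short check confirms that this triple still satisfies the defining conditions of $\mathcal{B}$: $-s_2\le s_1$ from $s_1+s_2>0$, $-s_2\le -s$ from $s\le s_2$, the new ``$s_1+s_2>d/2+s$'' rewrites exactly as the original, and $s_1+(-s)>0$ follows from $s_1>0>s$.

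The main obstacle is verifying that each strict inequality in $\mathcal{B}$ survives both the duality reshuffle for $s<0$ and the exponent optimization for $s\ge 0$, so that no endpoint of the Sobolev embedding scale ($s_i=d/2$, $s_i=0$ with $v\in L^2$ only, or $s_1+s_2=d/2+s$) is touched. The asymmetric assumptions $s_1>0$ and the \emph{strict} $s_1+s_2>0$ in $\mathcal{B}$ are designed precisely to avoid these degeneracies uniformly in $d\in\{1,2,3\}$ and to accommodate a factor $v$ of merely $L^2$-regularity, which is the case exploited later when estimating $f(u)$ and its Fréchet derivatives under nonsmooth initial data.
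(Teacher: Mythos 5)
Your strategy---reduce to the scalar Sobolev multiplication estimate via the norm equivalence \eqref{ONEH} when $s\ge 0$, and dualize when $s<0$---coincides with the paper's Cases 1 and 2 (the paper simply cites \cite[Theorem 7.4]{multiplication21} for the nonnegative case rather than re-deriving it by H\"older/fractional Leibniz, and your membership check for the dual triple $(-s_2,s_1,-s)$ is the same one the paper performs). Note that $s\ge 0$ automatically forces $s_2\ge 0$, since $\mathcal{B}$ requires $s\le s_2$, so the genuine trichotomy is $s\ge 0$; $s<0,\ s_2<0$; and $s<0,\ s_2\ge 0$.

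The gap is in the mixed case $s<0<s_2$, which $\mathcal{B}$ permits (e.g.\ $(s,s_1,s_2)=(-1,1,1)$ for $d=2$) and which your proposal folds into the duality argument. Pairing $(uv,\varphi)=(v,u\varphi)$ reduces the claim to bounding $\|u\varphi\|_{-s_2}$, i.e.\ to the triple $(-s_2,s_1,-s)$---but now the first index $-s_2$ is \emph{negative}, so this is not ``the already-proved nonnegative case''; it is again a product estimate in a negative-order norm with a nonnegative third index, i.e.\ precisely the case under proof. Dualizing once more returns you to $(s,s_1,s_2)$ (for $(-1,1,1)$ the dual triple is literally the same triple), so the argument is circular. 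Crudely dominating $\|u\varphi\|_{-s_2}$ by $\|u\varphi\|_{0}$ would require the triple $(0,s_1,-s)$, i.e.\ $s_1-s>d/2$, which does not follow from $s_1+s_2>d/2+s$ when $s_2>0$, so the estimate is lost. The paper closes this case by a different mechanism: it first shows $\|uv\|_{L^r}\le C\|u\|_{s_1}\|v\|_{s_2}$ with $\frac1r=\max\bigl\{\frac12-\frac{s_1}{d},\,\frac12-\frac{s_2}{d},\,1-\frac{s_1+s_2}{d}+\varepsilon,\,\frac12\bigr\}$, and then uses the Sobolev embedding $D(A^{-s/2})\hookrightarrow L^{r'}$ together with duality to conclude $\|A^{s/2}(uv)\|\le \|uv\|_{L^r}$. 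You need an argument of this type (or some other non-circular treatment) for $s<0\le s_2$ before the lemma is fully proved.
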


\begin{proof}
    We consider three cases separately:

    \textbf{Case 1:} $s\geq 0,s_2\geq 0$.
    Using \cite[Theorem 7.4]{multiplication21} and the equivalence of norms \eqref{ONEH}, the conclusion is evidently true.

    \textbf{Case 2:} $s<0,s_2<0$. For $w_1\in L^2$ and $\theta\geq 0$, we have
    \begin{align*}
        \sup_{w_2\in D(A^\theta)} \frac{|(w_1,w_2)|}{\Vert A^\theta w_2 \Vert}=\sup_{w_2\in D(A^\theta)} &\frac{|(A^{-\theta} w_1,A^{\theta} w_2)|}{\Vert A^\theta w_2 \Vert} \leq \Vert A^{-\theta} w_1 \Vert,
        \\
        \sup_{w_2\in D(A^\theta)} \frac{|(w_1,w_2)|}{\Vert A^\theta w_2 \Vert}\geq \frac{|(w_1,A^{-2\theta}w_1)|}{\Vert A^{-\theta} w_1 \Vert}&= \frac{|(A^{-\theta}w_1,A^{-\theta}w_1)|}{\Vert A^{-\theta} w_1 \Vert}\geq \Vert A^{-\theta} w_1 \Vert.
    \end{align*}
    Then $\sup_{w_2\in D(A^\theta)} \frac{|(w_1,w_2)|}{\Vert A^\theta w_2 \Vert} = \Vert A^{-\theta} w_1 \Vert$. 
    Similar to \cite[Theorem 8.1]{multiplication21}, it follows from a duality argument that
    \begin{align*}
        &\quad \Vert A^{s/2} (u v)\Vert = \sup_{w\in D(A^{-s/2})} \frac{|(u v,w)|}{\Vert A^{-s/2} w \Vert}=\sup_{w\in D(A^{-s/2})} \frac{|(v,uw)|}{\Vert A^{-s/2} w \Vert} \\
        &\leq \sup_{w\in D(A^{-s/2})} \frac{\Vert A^{s_2/2}v\Vert \Vert A^{-s_2/2}(u w)\Vert}{\Vert A^{-s/2} w \Vert}
        \leq \sup_{w\in D(A^{-s/2})} \frac{\Vert A^{s_2/2}v\Vert \Vert A^{s_1/2}u\Vert \Vert A^{-s/2}w\Vert }{\Vert A^{-s/2} w \Vert} 
        \\
        &  \leq \Vert A^{s_2/2}v\Vert \Vert A^{s_1/2}u\Vert. 
    \end{align*}
    where $(-s_2,s_1,-s)$ can be verified to be in $\mathcal{B}$.
    
    \textbf{Case 3:} $s<0,s_2\geq 0$. Employing the same techniques that were used in the proof of \cite[Theorem 8.2]{multiplication21}, let
    $$
        \dfrac{1}{r}=\max\biggl\{\dfrac{1}{2}-\dfrac{s_1}{d},\dfrac{1}{2}-\dfrac{s_2}{d},1-\dfrac{s_1+s_2}{d}+\varepsilon,\dfrac{1}{2}\biggr\}.$$
    According to \cite[Theorem 7.4]{multiplication21} and the equivalence of norms \eqref{ONEH}, we have $\Vert u v\Vert_{L^r}\leq \Vert u\Vert_{s_1}\Vert v\Vert_{s_2}\leq  \Vert A^{s_1/2}u\Vert\Vert A^{s_2/2}v\Vert $. 
    Now, it is enough to prove $\Vert A^{s/2} (u v)\Vert\leq \Vert u v\Vert_{L^r}$. Let $L^{r'}=(L^{r})^*$. By Sobolev embedding theorem, it follows that $D(A^{-s/2})\hookrightarrow L^{r'}$. 
    For $w\in D(A^{-s/2})\subset L^{r'} $, we get
    \begin{align*}
       \frac{|(u v,w)|}{\Vert A^{-s/2}w\Vert} \leq \frac{|(u v,w)|}{\Vert w\Vert_{L^{r'}}}.
    \end{align*}
    Taking the supremum on both sides, the conclusion can be drawn. \qed
\end{proof}

The $\frac{3}{2}$ constraint in the set $\mathcal{B}$ is imposed to facilitate the extension of Lemma \ref{le:pointmult} to its discrete analogue in next section. 
By the definition of $f$ in \eqref{def_f} and Lemma \ref{le:pointmult}, we obtain the following Lipschitz continuity of $f(u)$: 
\begin{equation*}
    \|A^{s/2}(f(u)-f(v))\|\leq C(\|A^{s_1/2}u\|+\|A^{s_1/2}v\|+1)\|A^{s_2/2}(u-v)\|.
\end{equation*}
where $(s,s_1,s_2)\in \mathcal{B}$, $u\in D(A^{s_1/2})\cap L^\infty$ and $v\in D(A^{s_2/2})\cap L^\infty$. 
This implies the bound of the nonlinearity
\begin{equation}
    \|A^{s/2}f(u)\|\leq \|f(u)-f(0)\|+\|f(0)\| \leq C(\|A^{s_1/2} u\|+1)\|A^{s_2/2}u\|. \label{BoNLT}
\end{equation}

\section{Error analysis for spatial discretization scheme}
\label{sec:spadis_erran} 

This section presents the spatial discretization of problem \eqref{eq2} via the Galerkin finite element method, leading to the approximate problem \eqref{eq3}. We first establish the relationship between the continuous and discrete problems, then analyze the spatial discretization errors. Finally, we provide key estimates for the solution $u^h(t)$ and the nonlinear term $f_h(u)$, which serve as the foundation for the temporal discretization error analysis in Section \ref{sec:fuldis_erran}.

\subsection{Spatial discretization scheme}
Let $\pi_{h}$ be the triangulation with a maximum element size of $h$ for the domain $\Omega$. We assume that $\pi_{h}$ is regular. Let $V_h$ be the space of  continuous and piecewise linear functions defined over the triangulation $\pi_h$ and $X_h=(V_h)^3$. The projection operator $P_h$ from $X$ to $X_h$ is defined as 
\begin{equation*}
    (P_h u,v_h)=(u,v_h),\quad \forall\: v_h\in X_h,\text{ for } u\in X.
\end{equation*}
For problem \eqref{eq2}, consider the sesquilinear form
\begin{equation*}
    \hat{a}_i(u,v)=-\int_{\Omega}a_i\nabla u \nabla v \text{d}x+\int_{\Omega}b_i u v \text{d}x,\qquad u,v\in H^1,
\end{equation*}
associated with $-A_i$, where $b_1=\lambda^{-1}$, $b_2=1$ and $b_3=\delta^{-1}\rho$. Define $a(u,v)=\hat{a}_1(u_1,v_1)+\hat{a}_2(u_1,v_2)+\hat{a}_3(u_1,v_2)$ where $u,v\in (H^1)^3$. 
Then the discrete operator $A_{h}:X_h\longrightarrow X_h$ is defined by
\begin{equation*}
    (-A_{h}u_h,v_h)={a}(u_h,v_h),\quad\forall\: v_h\in X_h,\text{ for }u_h\in X_h,\:i=1,2,3.
\end{equation*}
Subsequently, we define the Ritz operator $R_{h}:(H^1)^3\longrightarrow X_h$, 
\begin{equation*}
    {a}(R_{h}u,v_h)={a}(u,v_h),\qquad \forall\: v_h\in X_h,\text{ for }u\in (H^1)^3,\:i=1,2,3.
\end{equation*}
Combining these components, we obtain the Galerkin finite element scheme for \eqref{eq2}: find $u^h(t)\in X_h$, such that
\begin{equation}
	\left\{\;
	\begin{aligned} 
		&\frac{du^h(t)}{dt}=A_h u^h(t)+f_h(u^h(t)), \quad t\in (0,T],\\
		&u^h(0)=P_h u_0,
	\end{aligned}
	\right.
	\label{eq3}
\end{equation}
where $f_h(u^h(t))=P_h (f(u^h(t)))$. 

In this study, we utilize the semigroup approach for numerical analysis. It is observed that $-A$ and $-A_h$ are sectorial operators in $X$ and $X_h$, respectively, associated with the sesquilinear form $a(u,v)$. Thus, $A$ and $A_h$ serve as the infinitesimal generators of the analytic semigroup $S(t)=\text{e}^{tA}$ on $X$ and $S_h(t)=\text{e}^{tA_h}$ on $X_h$, respectively. The following estimates are valid for $S(t)$ and $S_h(t)$ (see \cite{SG_P,ABsPEE}).

\begin{lemma}\label{le:semigroup}
    Let $\alpha,\alpha'\in R$ and $0\leq \theta\leq 1$. Then the following estimates hold (see \cite{ABsPEE,SG_P}).
  \begin{align*}
      \Vert A^\alpha S(t)\Vert _{\mathcal{L}(X)} &\leq Ct^{-\alpha},& & t>0,\:\alpha\geq 0,& \\
      \Vert A^{-\theta}(I-S(t))\Vert _{\mathcal{L}(X)}&\leq Ct^\theta,& & t\geq 0,&\\
      A^\alpha S(t)&=S(t) A^\alpha, & &\text{on}\enspace D(A^\alpha),&\\
       D(A^\alpha)&\subset D(A^{\alpha'}), & &\text{if}\enspace \alpha\geq\alpha'.&
  \end{align*}
  These estimates hold with a uniform constant $C$ (independent of $h$) when $A$ and $S(t)$ are replaced by their discrete versions $A_h$ and $S_h(t)$  respectively.
\end{lemma}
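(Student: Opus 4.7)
The plan is to exploit the fact that $-A$ is a positive self-adjoint operator on the Hilbert space $X$, since each $-A_i = -a_i\Delta + b_i I$ with $b_i > 0$ under homogeneous Neumann boundary conditions is positive definite on $L^2$. The spectral theorem then supplies an orthonormal eigenbasis $\{\phi_k\}$ of $X$ with positive eigenvalues $0 < \mu_1 \leq \mu_2 \leq \cdots \to \infty$. Any Borel function $g$ of $-A$ acts by $g(-A)u = \sum_k g(\mu_k)(u,\phi_k)\phi_k$, and its operator norm on $X$ equals $\sup_k |g(\mu_k)|$. Under this framework, all four estimates reduce to elementary scalar bounds on $[\mu_1,\infty)$.

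First, the map $\mu \mapsto \mu^\alpha e^{-t\mu}$ attains its maximum on $(0,\infty)$ at $\mu = \alpha/t$ with value $(\alpha/(et))^\alpha$, yielding $\Vert A^\alpha S(t)\Vert_{\mathcal{L}(X)} \leq C t^{-\alpha}$ for $\alpha \geq 0$ (with the case $\alpha = 0$ being trivial). Second, the elementary inequality $1 - e^{-x} \leq \min(1,x) \leq x^\theta$ for $x \geq 0$ and $\theta \in [0,1]$ (itself following from $e^{-x}\geq 1-x$ and $e^{-x}\geq 0$) gives $(1 - e^{-t\mu})\mu^{-\theta} \leq t^\theta$, which controls $\Vert A^{-\theta}(I - S(t))\Vert_{\mathcal{L}(X)}$. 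Third, the commutativity $A^\alpha S(t) = S(t) A^\alpha$ on $D(A^\alpha)$ is immediate from the spectral expansion, since the scalar functions $\mu \mapsto \mu^\alpha$ and $\mu \mapsto e^{-t\mu}$ commute pointwise. Fourth, the inclusion $D(A^\alpha) \subset D(A^{\alpha'})$ for $\alpha \geq \alpha'$ follows from $\mu_k^{2\alpha'} \leq \mu_1^{2(\alpha'-\alpha)}\,\mu_k^{2\alpha}$, which relies on the strictly positive spectral gap $\mu_1 > 0$.

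For the discrete analogues the same scheme applies: $-A_h$ is symmetric positive definite on the finite-dimensional space $X_h$ equipped with the restricted $L^2$ inner product, so it admits its own finite eigensystem and the scalar inequalities above transfer verbatim. The only point that requires care is the uniformity of the constants in $h$, which hinges on a uniform-in-$h$ lower bound on the spectrum of $-A_h$.

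The main obstacle, and really the only nontrivial issue, is establishing that uniform lower spectral bound. It rests on the observation that the sesquilinear form $a(\cdot,\cdot)$ is coercive on $(H^1)^3$ with constants determined solely by $\min_i a_i$ and $\min_i b_i$, both strictly positive and independent of the mesh. Since the Galerkin discretization is conforming ($X_h \subset (H^1)^3$), this coercivity is inherited by $-A_h$ without degradation, so its smallest eigenvalue is bounded below by a constant independent of $h$. Once this is in hand, the four discrete estimates follow from exactly the same scalar bounds with $h$-independent constants, completing the proof.
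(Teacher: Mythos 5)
Your proof is correct, but it takes a different route from the paper, which offers no proof at all for this lemma and instead refers the reader to the standard literature on analytic semigroups generated by sectorial operators (the cited monographs establish these bounds via the resolvent/contour-integral functional calculus, an argument that does not require self-adjointness). Your spectral-theoretic argument is a legitimate and more elementary alternative: since $-A$ is self-adjoint, positive definite, and has compact resolvent on the bounded domain $\Omega$, the reduction of all four statements to scalar inequalities on the spectrum $[\mu_1,\infty)$ is valid, and each scalar bound you give ($\sup_\mu \mu^\alpha e^{-t\mu}=(\alpha/(et))^\alpha$, $1-e^{-x}\le x^\theta$ for $\theta\in[0,1]$, pointwise commutativity, and $\mu_k^{2\alpha'}\le \mu_1^{2(\alpha'-\alpha)}\mu_k^{2\alpha}$) is correct. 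You have also correctly isolated the only genuinely delicate point, the $h$-uniformity of the constants, and your resolution --- the smallest eigenvalue of $-A_h$ is the Rayleigh quotient minimum of the coercive form $a(\cdot,\cdot)$ over the conforming subspace $X_h\subset (H^1)^3$, hence bounded below by $\min_i b_i>0$ independently of $h$ --- is exactly the right mechanism. What your approach buys is a short, self-contained proof tailored to this particular operator; what the paper's citation-based approach buys is generality, since the same estimates hold for non-self-adjoint sectorial operators, which is irrelevant here but is why the references state them in that form. One cosmetic caveat: your coercivity argument presumes the form is $\int a_i\nabla u\nabla v+\int b_i uv$; the paper's displayed formula for $\hat a_i$ carries a stray minus sign on the gradient term, which is evidently a typo you are right to ignore.
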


We will establish the relationship between problem \eqref{eq2} and its spatial semidiscrete scheme \eqref{eq3}. 
\begin{lemma}\label{le:semidis_prop}
For spatial semi-discretization of problem \eqref{eq2}, the Galerkin finite element method \eqref{eq3} exhibits the following properties:
\begin{align}
    \|{u}_h\|_{s}&\leq C h^{-s}\|{u}_h\|, &&{u}_h \in X_h\text{ , } 0\leq s<3/2,& \label{uhs}  \\
    \Vert A^\theta u_h\Vert &\leq C\Vert A^\theta_h u_h\Vert,&& u_h\in X_h\text{, }-{3}/{4}\leq \theta < {3}/{4},&\label{Auh}   
    \\
    \Vert A_h^\theta P_h u\Vert &\leq C\|A^\theta u\|,&& u\in D(A^\theta)\cap L^2 \text{, }-{3}/{4}\leq \theta\leq 1,&\label{AhPh}\\\|(1-R_{h})u\|_s &\leq C h^{r-s}\|u\|_r, && u\in \:D(A^{r/2}),\:s\in[0,1],\:r\in[1,2].&\label{I_Rh}
\end{align}
\end{lemma}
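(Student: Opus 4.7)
The plan is to prove the four estimates in sequence, using standard finite element tools: an element-wise scaling argument for inverse inequalities, norm equivalences induced by the sesquilinear form $a$, the identity $A_h R_h = P_h A$, and a combination of real interpolation and duality to handle the fractional and negative ranges of $\theta$.

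For the inverse estimate \eqref{uhs}, I would work element-wise. The classical scaling argument on each $K \in \pi_h$ gives $\|v_h\|_{H^1(K)} \leq C h_K^{-1} \|v_h\|_{L^2(K)}$ for piecewise linears, which sums to the $s = 1$ case; together with the trivial $s = 0$ case and interpolation, this yields $0 \leq s \leq 1$. For $1 < s < 3/2$, I would use the Slobodeckij seminorm directly, exploiting the fact that continuous piecewise linears on a regular triangulation belong to $H^s$ only for $s < 3/2$. Estimate \eqref{Auh} then follows by combining \eqref{ONEH} with the discrete equivalence $\|A_h^\theta u_h\| \simeq \|u_h\|_{2\theta}$: the case $\theta = 0$ is trivial; at $\theta = 1/2$ the identity $\|A_h^{1/2} u_h\|^2 = a(u_h, u_h)$ and the coercivity of $a$ give the two-sided equivalence with $\|u_h\|_1$; negative $\theta$ follows by duality, and intermediate values by real interpolation, with the cap $\theta < 3/4$ ensuring that no Neumann boundary condition enters through $D(A^\theta)$.

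For the projection estimate \eqref{AhPh}, the case $\theta = 0$ is the $L^2$-stability of $P_h$. For $\theta = 1$, the key step is the identity $A_h R_h = P_h A$, verified by testing against $v_h \in X_h$: $(A_h R_h u, v_h) = -a(R_h u, v_h) = -a(u, v_h) = (Au, v_h) = (P_h Au, v_h)$. Writing $A_h P_h u = P_h Au + A_h(P_h - R_h) u$, the second term is controlled by combining \eqref{uhs} with the Ritz bound \eqref{I_Rh} at $s = 0$, $r = 2$: $\|A_h(P_h - R_h)u\| \leq C h^{-2} \cdot h^2 \|u\|_2 \leq C \|Au\|$. Real interpolation then covers $\theta \in (0, 1)$, and duality (reusing \eqref{Auh} on the dual side) extends the bound to $\theta \in [-3/4, 0]$. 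Finally, \eqref{I_Rh} is the classical Ritz error estimate: C\'ea's lemma with standard interpolation on $V_h$ gives the $H^1$ case at $r = 2$, the Aubin--Nitsche duality argument (using full elliptic regularity on the convex polygonal domain) yields the $L^2$ case, and real interpolation fills in $s \in [0, 1]$, $r \in [1, 2]$.

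The principal obstacle is the endpoint $\theta = 1$ in \eqref{AhPh}: it forces \eqref{uhs} and \eqref{I_Rh} to be established first and then combined in a tight balance, since the inverse inequality contributes a factor $h^{-2}$ that must be exactly absorbed by the $O(h^2)$ Ritz error --- any looseness would lose a power of $h$. The thresholds $s < 3/2$ in \eqref{uhs} and $\theta < 3/4$ in \eqref{Auh} are both genuine: they reflect, respectively, that $V_h \not\subset H^{3/2}$ in general and that above $\theta = 3/4$ the domain $D(A^\theta)$ would carry embedded Neumann conditions that the unconstrained space $X_h$ does not match.
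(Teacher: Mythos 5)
Your proposal follows essentially the same route as the paper: the inverse estimate \eqref{uhs} via an element-wise/Slobodeckij-seminorm argument (this is exactly how the paper's Appendix Lemma A.1 handles $d=3$; the $d=2$ case is cited), the Ritz bound \eqref{I_Rh} by the classical $r=1,2$ estimates plus operator interpolation, and the negative-$\theta$ ranges of \eqref{Auh} and \eqref{AhPh} by the same duality arguments the paper displays. For the nonnegative-$\theta$ parts of \eqref{Auh}--\eqref{AhPh} the paper simply cites \cite{NY02} and \cite[Section 7.3]{ABsPEE}, so your reconstruction via $A_h R_h = P_h A$ and interpolation is a legitimate filling-in of what those references contain rather than a different method. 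One small imprecision: the bound $\|A_h v_h\|\leq Ch^{-2}\|v_h\|$ that you need to absorb the Ritz error at the endpoint $\theta=1$ does not follow from \eqref{uhs}, which is capped at $s<3/2$ and concerns Sobolev norms rather than powers of $A_h$; it is instead the standard discrete inverse estimate for $A_h$ itself, obtained e.g.\ from $\|A_h v_h\|^2=-a(v_h,A_h v_h)\leq C\|v_h\|_1\|A_h v_h\|_1$ together with two applications of the $H^1$ inverse inequality. With that substitution your argument closes.
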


\begin{proof}
    It is well known that the following error estimates hold (see e.g. \cite{TMToFEM,FEMoEP})
\begin{equation*}
	\|R_{h}v-v\| +h\|R_{h}v-v\|_1 \leq Ch^r\|v\|_r,\quad v\in D(A^{r/2}),\quad r=1,2.
\end{equation*}
Using operator interpolation theory \cite[Proposition 14.1.5]{TMToFEM}, we first interpolate the inequalities pairwise and then interpolate the resulting estimates to obtain the bound \eqref{I_Rh}.  

Additionally, for the case $d=2$ and $0\leq s<\frac32$, the space $X_h$ is continuously embedded in $(H^s)^3$ with the following estimate 
\begin{align*}
	\|{u}_h\|_{s}\leq C_s h^{-s}\|{u}_h\|,\qquad{u}_h \in X_h,
\end{align*}
see \cite[Proposition 7.1]{ABsPEE} for details. Analogous results hold for dimensions $d=1$ and $d=3$; the latter case is proved in Lemma \ref{lem:3dcase}. Based on this estimate, the bounds \eqref{Auh} and \eqref{AhPh} are obtained for the non-negative parameter range $\theta\geq 0$. 
For complete technical details, we refer to \cite{NY02} or \cite[Section 7.3]{ABsPEE}. The dual counterparts of \eqref{Auh} and \eqref{AhPh} can be established through the following inequality. 
\begin{align*}
    \Vert A_h^{-\theta}P_h u\Vert &= \sup_{v_h\in V_h} \frac{|(P_h u ,v_h)|}{\Vert A_h^{\theta} v_h \Vert}\leq C\sup_{v\in D(A^{\theta})} \frac{|(u,v)|}{\Vert A^{\theta} v \Vert} = C \Vert A^{-\theta} u \Vert,\\
    (A^{-\theta}u_h, v)&=a(u_h,R_h A^{-1-\theta}v)=(A_h^{-\theta} u_h,A_h^{1+\theta}R_h A^{-1-\theta}v)\\
    &=(A_h^{-\theta} u_h,A_h^{\theta}P_h A^{-\theta}v)\leq \Vert A_h^{-\theta} u_h\Vert \Vert v\Vert, 
\end{align*}
where $\theta>0$. This completes the proof. \qed
\end{proof}

\begin{remark}
	By combining \eqref{ONEH}, \eqref{Auh} and \eqref{AhPh}, we establish the following norm equivalence relation on the discrete space $X_h$: 
	\begin{align}
		\|A_h^\theta \cdot\| \sim \|&A^\theta \cdot\| \sim \|\cdot\|_{2\theta}, \quad  0 \leq \theta < \frac{3}{4}. \label{NormEq1}
	\end{align}
    This equivalence enables the application of semigroup methods for convergence analysis in fractional Sobolev spaces. The norm equivalence above and the semigroup properties in Lemma \ref{le:semigroup} are frequently used in subsequent analyses. To maintain conciseness, we will not explicitly reiterate them each time they are referenced.
\end{remark}

We observe that the pointwise multiplication $u_h v_h$ belongs to $(L^2)^3$ according to \cite[Theorem 7.4]{multiplication21} and the fact that  $X_h \subset (H^s)^3$ ($s<\frac{3}{2}$). Combining this result with the definition of $f$, Lemma \ref{le:pointmult}, and the bounds \eqref{Auh}-\eqref{AhPh}, we derive the key estimates for the nonlinearity $f_h(u_h)$: 
\begin{align}
    \|A_h^{s/2}(f_h(u_h)-f_h(v_h))\|
     &\leq C(\|A_h^{s_1/2}u_h\|+\|A_h^{s_1/2} v_h\|+1)\|A_h^{s_2/2} (u_h-v_h)\|,\label{localLips_discrete}\\
    \Vert A_h^{s/2} D_u f_h(u_h) v_h\Vert &\leq C(\Vert A_h^{s_1/2} u_h\Vert+1) \Vert A_h^{s_2/2} v_h\Vert,\label{AhDufh}
    \\
    \|A_h^{s/2} D_{uu} f_h(u_h)({v}_h, {w}_h)\| &\leq C\Vert A_h^{s_1/2}  {v}^h\Vert \Vert A_h^{s_2/2}  {w}_h\Vert,\label{AhDuufh}
\end{align}
where $(s,s_1,s_2)\in \mathcal{B}$, $u_h,v_h,w_h\in X_h$.

\subsection{Spatial error analysis}
This subsection is dedicated to the analysis of spatial discretization error. 
We start by investigating the error related to the linear part of problem \eqref{eq2}. 
\begin{proposition}\label{pro:lin_spa_err}
  Let $S(t)$ and $S_h(t)$ be the analytic semigroup generated by $A$ and $A_h$, respectively. For $s\in[0,1]$, $r\in [1,2]$, and $\alpha\in[0,r]$, if $w_0\in D(A^{\alpha/2})$, then the subsequent estimate holds
  \begin{align*}
      \|(S(t)-S_h(t)P_h) w_0\|_s\leq Ch^{r-s}t^{-(r-\alpha)/2}\|w_0\|_\alpha,\qquad t\in (0,T].
  \end{align*}
\end{proposition}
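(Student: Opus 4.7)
The plan is to adopt the classical Ritz-projection splitting and then use operator interpolation to cover the full range of $(s,\alpha)$. I would decompose
$$(S(t) - S_h(t)P_h)w_0 = (I - R_h)S(t)w_0 + \bigl(R_h S(t)w_0 - S_h(t)P_h w_0\bigr) =: \rho(t) + \theta(t).$$

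The term $\rho(t)$ is handled directly: the Ritz estimate \eqref{I_Rh} gives $\|\rho(t)\|_s \leq Ch^{r-s}\|S(t)w_0\|_r$, and the semigroup smoothing $\|A^{r/2}S(t)w_0\| \leq Ct^{-(r-\alpha)/2}\|A^{\alpha/2}w_0\|$ from Lemma \ref{le:semigroup}, combined with the norm equivalence \eqref{ONEH}, yields $\|\rho(t)\|_s \leq Ch^{r-s}t^{-(r-\alpha)/2}\|w_0\|_\alpha$ at once.

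For $\theta(t) \in X_h$, I would derive its evolution equation by differentiating and invoking the identity $A_h R_h = P_h A$ on $D(A)$, which follows directly from the definitions of $R_h$, $A_h$, and $P_h$ together with the bilinear form $a$. This yields $\theta_t - A_h \theta = (R_h - P_h)AS(t)w_0$ with $\theta(0) = (R_h - P_h)w_0$, so Duhamel's formula provides
$$\theta(t) = S_h(t)(R_h - P_h)w_0 + \int_0^t S_h(t-\tau)(R_h - P_h)AS(\tau)w_0\,d\tau.$$
I would estimate $\|\theta(t)\|_s$ via the equivalence $\|\cdot\|_s \sim \|A_h^{s/2}\cdot\|$ on $X_h$ from \eqref{NormEq1}, the discrete smoothing $\|A_h^{s/2}S_h(\sigma)\|_{\mathcal L(X_h)} \leq C\sigma^{-s/2}$, the bound $\|(R_h - P_h)v\| \leq Ch^{r}\|v\|_r$ (combining \eqref{I_Rh} with the standard $L^2$-projection error), and Lemma \ref{le:semigroup} to convert $\|AS(\tau)w_0\|_r$ into $\tau^{-1-(r-\alpha)/2}\|w_0\|_\alpha$. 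A splitting $\int_0^{t/2} + \int_{t/2}^t$ together with elementary Beta-function bookkeeping then absorbs the singularities into the factor $t^{-(r-\alpha)/2}$.

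The hard part will be making the estimate uniform across all fractional indices $s \in [0,1]$ and $\alpha \in [0,r]$. The cleanest route is to establish the four corner cases $(s,\alpha)\in\{(0,0),(0,r),(1,0),(1,r)\}$ by the argument above, where at the low-regularity endpoints one must redistribute powers of $A$ between $\rho$ and $S_h$ to avoid non-integrable singularities, and then interpolate first in $\alpha$ for fixed $s$ and subsequently in $s$, analogously to the proof of \eqref{I_Rh} in Lemma \ref{le:semidis_prop}. The restrictions $r \in [1,2]$ and $s \in [0,1]$ are precisely those needed for the Ritz estimate \eqref{I_Rh} and the inverse inequality \eqref{uhs} (underlying the discrete smoothing in $\|\cdot\|_s$) to remain available.
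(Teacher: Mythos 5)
Your decomposition $(S(t)-S_h(t)P_h)w_0=\rho(t)+\theta(t)$ with $\rho=(I-R_h)S(t)w_0$, and the treatment of $\rho$ via \eqref{I_Rh} and semigroup smoothing, coincide with the paper's argument. The gap is in the treatment of $\theta$. The Duhamel formula you write,
\begin{equation*}
\theta(t)=S_h(t)(R_h-P_h)w_0+\int_0^t S_h(t-\tau)(R_h-P_h)AS(\tau)w_0\,d\tau,
\end{equation*}
cannot be estimated as it stands for any of your corner cases. First, the initial term is out of reach when $\alpha<r$: the only available bound is $\|(R_h-P_h)w_0\|\leq Ch^{r}\|w_0\|_{r}$, which requires $H^{r}$ regularity of $w_0$ (indeed, for $\alpha=0$ the quantity $R_hw_0$ is not even defined, $R_h$ acting only on $(H^1)^3$), and the semigroup $S_h(t)$ in front offers no mechanism to trade time singularities for powers of $h$ on this term. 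Second, the integrand satisfies $\|(R_h-P_h)AS(\tau)w_0\|\leq Ch^{r}\|AS(\tau)w_0\|_{r}\leq Ch^{r}\tau^{-1-(r-\alpha)/2}\|w_0\|_{\alpha}$, which is non-integrable at $\tau=0$ for \emph{every} $\alpha\in[0,r]$ (the exponent is always $\leq -1$); splitting the integral at $t/2$ and ``Beta-function bookkeeping'' cannot repair this, because on $[0,t/2]$ the smoothing factor $(t-\tau)^{-s/2}$ is harmless and the divergence sits entirely at $\tau=0$.

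The missing idea — which is the heart of the paper's proof, imported from the cited nonsmooth-data lemma — is an integration by parts in $\tau$ over $[0,t/2]$, moving $D_\tau$ off $\rho$ and onto $S_h(t-\tau)$. This simultaneously (i) produces the boundary term $-S_h(t)P_h\rho(0)$, which exactly cancels the problematic initial term $S_h(t)\theta(0)$, and (ii) replaces $\rho_\tau\sim\tau^{-1-(r-\alpha)/2}$ by $\rho\sim\tau^{-(r-\alpha)/2}$ at the cost of a factor $A_hS_h(t-\tau)\sim(t-\tau)^{-1}\leq 2t^{-1}$, yielding the representation
\begin{equation*}
\theta(t)=-S_h(t/2)P_h\rho(t/2)-\int_0^{t/2}A_hS_h(t-\tau)P_h\rho(\tau)\,d\tau-\int_{t/2}^{t}S_h(t-\tau)P_hD_\tau\rho(\tau)\,d\tau.
\end{equation*}
Your phrase ``redistribute powers of $A$ between $\rho$ and $S_h$'' points in this direction but does not identify the mechanism, and without it the plan fails at every corner. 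Note also that even after this step the extreme corner $r=2$, $\alpha=0$ still produces $\int_0^{t/2}\tau^{-1}\,d\tau$ and requires a further refinement (the paper invokes an auxiliary lemma bounding $\int_0^{t/2}(t-\tau)^{-1}\|A^{(r-\alpha)/2}S(\tau)\|_{\mathcal{L}(X)}\,d\tau$ by $Ct^{-(r-\alpha)/2}$), so the ``elementary bookkeeping'' would at best deliver a logarithmic loss there. Your interpolation strategy in $(s,\alpha)$ is fine in principle but is not what saves you here.
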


\begin{proof}
    The case $s = 0$ is established in \cite[Lemma 3.1]{LPSDE}. To derive the estimate for $s\in (0,1]$, we set
\begin{equation*}
    (S(t)-S_h(t)P_h) w_0=(S_h(t)P_h-R_hS(t))w_0+(R_h-I)S(t)w_0=:\theta(t)+\rho(t).
\end{equation*}
According to the proof in \cite[Lemma 3.1]{LPSDE}, we have 
\begin{equation*}
    \theta(t)=-S_h(t/2)P_h\rho(t/2)-\int_0^{t/2}A_h S_h(t-\tau)P_h\rho(\tau) \text{d}\tau-\int_{t/2}^t S_h(t-\tau)P_hD_s\rho(\tau) \text{d}\tau.
\end{equation*}
It follows from \eqref{AhPh} and semigroup properties that
\begin{align*}
    \Vert \theta(t)\Vert_s
    &\leq\left\Vert A_h^{s/2}S_h(t/2)P_h\rho(t/2)\right\Vert+\left\Vert\int_0^{t/2}A_h^{1+s/2} S_h(t-\tau)P_h\rho(\tau) \text{d}\tau \right\Vert\\
    &\quad\:\:+\left\Vert\int_{t/2}^t A_h^{s/2}S_h(t-\tau)P_hD_\tau\rho(\tau) \text{d}\tau \right\Vert
    \\
    &\leq C \Vert\rho(t/2)\Vert_s+C\int_0^{t/2}(t-\tau)^{-1}\Vert \rho(\tau)\Vert_s \text{d}\tau+\int_{t/2}^t \Vert D_\tau\rho(\tau)\Vert_s \text{d}\tau.
\end{align*}
Using the Ritz projection error estimate \eqref{I_Rh} yields
\begin{equation*}
    \left\{\;
    \begin{aligned}
        &\Vert\rho(t)\Vert_s \leq Ch^{r-s}\Vert A^{r/2}S(t)w_0\Vert\leq Ch^{r-s}t^{-(r-\alpha)/2} \Vert w_0 \Vert_\alpha, \\
        &\Vert D_t\rho(t)\Vert_1 \leq Ch^{r-s}\Vert A^{r/2+1}S(t)w_0\Vert\leq Ch^{r-s} t^{-1-(r-\alpha)/2}\Vert w_0 \Vert_\alpha.
    \end{aligned}\right.
\end{equation*}
Then we get
\begin{align*}
  \Vert \theta(t)\Vert_s &\leq Ch^{r-s} \Vert w_0\Vert_\alpha \left(t^{(\alpha-r)/2}+\int_0^{t/2}(t-\tau)^{-1}\Vert A^{(r-\alpha)/2} S(\tau)\Vert_{\mathcal{L}(X)} \text{d}\tau+\int_{t/2}^t \tau^{-1-(r-\alpha)/2}\text{d}\tau\right)
  \\
  &\leq Ch^{r-s} t^{-(r-\alpha)/2}\Vert w_0\Vert_\alpha,
\end{align*} 
where the treatment of the second term when $r=2$ and $\alpha=0$ follows from \cite[Lemma 2.1]{mukam}. This completes the proof. \qed
\end{proof}

We now proceed to analyze the spatial discretization error estimates. The analysis begins with establishing the well-posedness of the semidiscrete equation \eqref{eq3}. Applying \cite[Theorem 4.1]{ABsPEE}, we first obtain the local well-posedness of \eqref{eq3}. Building upon this result, we then analyze the spatial error for the local solution. 
In the subsequent analysis, we define $\hat{\gamma} = \min(3/2 - \varepsilon, \gamma)$, where $\varepsilon > 0$ is a small parameter ensuring relevant parameters lie in $\mathcal{B}$. Note that $\varepsilon$ may vary across different analytical contexts.

\begin{proposition}\label{pro:LSpa_err}
   Assuming there exist $h'>0$ and $T'>0$ such that for $h<h'$, equation \eqref{eq3} admits a solution $u^h(t)$ over the time interval $t\in[0,T']$. If $u^h(t)$ satisfies the estimate
  \begin{align}
      \|u^h(t)\|_{\hat{\gamma}}\leq C,\qquad 0\leq t\leq T',\label{uniform_error_condi}
  \end{align}
  then we have
  \begin{equation*}
      \|u(t)-u^h(t)\|_\mu \leq Ct^{-1+\gamma/2} h^{2-\mu},\qquad 0\leq \mu \leq 1,\:\: 0\leq t\leq T'.
  \end{equation*}
\end{proposition}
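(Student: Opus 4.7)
The plan is to subtract the mild (variation of constants) representations for $u$ and $u^h$, decompose the error into three terms, bound each using the linear estimate of Proposition~\ref{pro:lin_spa_err} and the multiplication/Lipschitz consequences of Lemma~\ref{le:pointmult}, and close a weighted Gronwall bootstrap. Writing $e(t):=u(t)-u^h(t)$, I would decompose $e(t)=E_0(t)+E_1(t)+E_2(t)$ with $E_0(t)=(S(t)-S_h(t)P_h)u_0$, $E_1(t)=\int_0^t(S(t-\tau)-S_h(t-\tau)P_h)f(u(\tau))\,d\tau$, and $E_2(t)=\int_0^t S_h(t-\tau)P_h[f(u(\tau))-f(u^h(\tau))]\,d\tau$.

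Proposition~\ref{pro:lin_spa_err} with $r=2$, $\alpha=\gamma$, $s=\mu$ immediately gives $\|E_0(t)\|_\mu\leq C h^{2-\mu}t^{-1+\gamma/2}\|u_0\|_\gamma$. For $E_1(t)$ I would split the integral at $t/2$: on $[0,t/2]$ use Proposition~\ref{pro:lin_spa_err} with $r=2$, $\alpha=0$ together with \eqref{f_L2bou} and $t-\tau\geq t/2$; on $[t/2,t]$ apply Proposition~\ref{pro:lin_spa_err} with $r=2$ and a small $\alpha_0>0$ so that the kernel $(t-\tau)^{-(2-\alpha_0)/2}$ is integrable, bounding $\|f(u(\tau))\|_{\alpha_0}$ by \eqref{BoNLT} with indices $(\alpha_0,\hat\gamma,s_2')\in\mathcal B$ together with the regularity estimate \eqref{bound_u}.

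For $E_2$, I would invoke the Lipschitz consequence of Lemma~\ref{le:pointmult} with $s_1=\hat\gamma$: then \eqref{bound_u} together with \eqref{uniform_error_condi} makes the prefactor $(1+\|A^{\hat\gamma/2}u\|+\|A^{\hat\gamma/2}u^h\|)$ uniformly bounded on $(0,T']$, reducing matters to $\|E_2(t)\|_\mu\leq C\int_0^t(t-\tau)^{-\mu/2}\|e(\tau)\|_{s_2}\,d\tau$ for $s_2$ just above $d/2-\hat\gamma$. The bootstrap proceeds in two stages: first, derive $\|e(t)\|_1\leq Cht^{-1+\gamma/2}$ via a weakly singular Gronwall inequality, using the interpolation $\|e\|_{s_2}\leq C\|e\|^{1-s_2}\|e\|_1^{s_2}$ (valid for $s_2\leq 1$); second, sharpen to $\|e(t)\|\leq Ch^2 t^{-1+\gamma/2}$ by an Aubin--Nitsche-type duality argument, rewriting $(E_2(t),\phi)=\int_0^t(f(u)-f(u^h),P_h S_h(t-\tau)\phi)\,d\tau$, estimating the integrand via the negative-Sobolev-norm Lipschitz bound (Lemma~\ref{le:pointmult} with $(-\sigma,\hat\gamma,0)\in\mathcal B$ for $\sigma>d/2-\hat\gamma$), and controlling $\|A_h^{\sigma/2}S_h(t-\tau)P_h\phi\|$ through the smoothing of $S_h$. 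Intermediate $\mu\in(0,1)$ then follow by standard interpolation between the two endpoints.

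The main obstacle is the 3D case with small $\gamma$: hypothesis \eqref{uniform_error_condi} forces $s_1\leq\hat\gamma$, and the constraint $s_1+s_2>d/2$ in $\mathcal B$ pushes $s_2$ above $1$ once $\gamma<1/2$, breaking the interpolation between $\|e\|$ and $\|e\|_1$ that closes the first bootstrap step. To cover this regime one must combine the inverse inequality \eqref{uhs} with the uniform bound \eqref{uniform_error_condi} on $u^h$ to produce a refined interpolation of $\|e\|_{s_2}$ that trades $h$-powers for regularity, preserving both the correct $h^{2-\mu}$ scaling and the integrability of the resulting time kernel.
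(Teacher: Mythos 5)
Your decomposition and the treatment of the linear pieces match the paper: $E_0$ is handled by Proposition~\ref{pro:lin_spa_err} with $r=2$, $\alpha=\gamma$, and $E_1$ by the same proposition with a small positive $\alpha$ together with \eqref{BoNLT} and \eqref{bound_u} (the paper uses $\alpha=2\varepsilon_1$ on the whole interval rather than splitting at $t/2$, but both give the integrable kernel $(t-\tau)^{-1+\varepsilon_1}\tau^{-1+\gamma/2}$). The divergence, and the gap, is in $E_2$. You measure the Lipschitz difference in $L^2$, which by $\mathcal{B}$ forces $\|f(u)-f(u^h)\|\leq C\|e\|_{s_2}$ with $s_2>d/2-\hat\gamma$, and then you must relate $\|e\|_{s_2}$ back to the norms you are estimating. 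As you yourself observe, for $d=3$ and $\hat\gamma<1/2$ this pushes $s_2$ above $1$ and the interpolation $\|e\|_{s_2}\leq C\|e\|^{1-s_2}\|e\|_1^{s_2}$ collapses, so your first bootstrap stage does not close precisely in the regime the paper is most concerned with. The proposed repair via the inverse inequality \eqref{uhs} is not carried out, and as sketched it is circular: trading regularity for powers of $h$ on the discrete part of $e$ requires the very $L^2$ bound you are trying to prove. So the proposal as written does not establish the proposition for all $d\leq 3$ and $0<\gamma<2$.

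The fix is already contained in your own duality step, and the paper applies it directly and uniformly: instead of bounding $f(u)-f(u^h)$ in $L^2$, choose $r_1>0$ with $(-r_1,\hat\gamma,\mu)\in\mathcal{B}$ and $\mu+r_1<2$ (always possible since $d/2-\hat\gamma-\mu<2-\mu$ and $r_1<3/2$ suffices), so that \eqref{localLips_discrete} gives $\|A_h^{-r_1/2}P_h(f(u(\tau))-f(u^h(\tau)))\|\leq C\|e(\tau)\|_\mu$ — the right-hand side is in exactly the norm being estimated. The smoothing of $S_h$ then yields $\|E_2(t)\|_\mu\leq C\int_0^t(t-\tau)^{-\mu/2-r_1/2}\|e(\tau)\|_\mu\,\mathrm{d}\tau+Ct^{-1+\gamma/2}h^{2-\mu}$ with an integrable kernel, and a single weakly singular Gronwall inequality (\cite[Theorem 1.27]{ABsPEE}) closes the argument for every $\mu\in[0,1]$ at once. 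No two-stage bootstrap, no interpolation between endpoint norms, and no separate Aubin--Nitsche step are needed; the negative-index case of Lemma~\ref{le:pointmult}, which you reserve for the $L^2$ duality, is the device that resolves the three-dimensional low-regularity obstruction you leave open.
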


\begin{proof}
    Using the constant variation formula for $u^h(t)$ and subtracting \eqref{VoCF1}, we get
    \begin{equation}
        \begin{aligned}
            e(t)&:=\|u(t)-u^{h}(t)\|_\mu\\
            &\leq\|S(t)u_0-S_h(t)P_h u_0\|_\mu\\
            &\quad\:\:+\left\|\int_0^t S(t-\tau)f(u(\tau))\text{d}\tau-\int_0^tS_h(t-\tau)f_h(u^h(\tau))\text{d}\tau\right\|_\mu\\
            &=:e_1(t)+e_2(t),\label{SDE_e}
        \end{aligned}
    \end{equation}
    where $e_1(t)$ denoting the linear part and $e_2(t)$ the nonlinear part of the spatial error. Using Proposition \ref{pro:lin_spa_err} with $s=\mu$ and $r=2$, $\alpha=\gamma$ yields 
  \begin{equation}
      e_{1}(t)\leq Ct^{-1+\gamma/2} h^{2-\mu}\|u_{0}\|_{{\gamma}}.\label{SDE_e1}
  \end{equation}
  Estimates \eqref{bound_u} and \eqref{uniform_error_condi} imply the boundedness of $u(t)$ and $u^h(t)$, respectively. Select $r_1>0$ such that $(-r_1,\hat{\gamma},\mu)\in \mathcal{B}$ and $\mu+r_1<2$. It follows from \eqref{localLips_discrete} that
  \begin{align*}
    \Vert A_h^{-r_1/2}P_h(f(u(t))-f(u^h(t)))\Vert \leq C \Vert u(t)-u^h(t)\Vert_\mu.
  \end{align*}
  Select  $\varepsilon_1$ sufficiently small to ensure  that $(2\varepsilon_1,\hat{\gamma},d/2-2\varepsilon_1)\in \mathcal{B}$. From
  \eqref{BoNLT} and \eqref{bound_u}, we get
  $$
    \Vert f(u(t))\Vert_{2\varepsilon_1}\leq Ct^{\gamma/2-d/4+\varepsilon_1}+C. $$
  Using Proposition \ref{pro:lin_spa_err} with $s=\mu$, $r=2$, $\alpha=2\varepsilon_1$, combining the above estimates, we obtain
  \begin{equation}
      \begin{aligned}
          e_{2}(t)
          &\leq\int_{0}^{t}\left\|A_h^{\mu/2+r_1/2}S_h(t-\tau)A_h^{-r_1/2}P_h(f(u(\tau))-f(u^h(\tau)))\right\|\text{d}\tau \\
          &\quad\:\:+\int_0^t \| (S(t-\tau)-S_h(t-\tau)P_h)f(u(\tau))\|_\mu\text{d}\tau \\
          &\leq C\int_0^t (t-\tau)^{-\mu/2-r_1/2}e(\tau)\text{d}\tau+ Ch^{2-\mu}\int_{0}^{t}(t-\tau)^{-1+\varepsilon_1}\tau^{-1+\gamma/2}\text{d}\tau
          \\
          &\leq C\int_0^t (t-\tau)^{-\mu/2-r_1/2}e(\tau)\text{d}\tau+ C t^{-1+\gamma/2} h^{2-\mu}.\label{SDE_e2}
      \end{aligned}
  \end{equation}
  Substituting \eqref{SDE_e1} and \eqref{SDE_e2} into \eqref{SDE_e}, we have
  \begin{align*}
      e(t)\leq C t^{-1+\gamma/2}h^{2-\mu}+C\int_{0}^{t}(t-\tau)^{-\mu/2-r_1/2}e(\tau)\text{d}\tau.
  \end{align*}
  Applying an inequality of Gronwall type (see \cite[Theorem 1.27]{ABsPEE})  completes the proof. \qed
\end{proof}

This local error estimate serves as the foundation for extending the solution to the global domain. 
We now ready to establish the well-posedness of the approximate problem \eqref{eq3} and derive the corresponding global spatial error estimates.

\begin{theorem}\label{thm:space_err}
  Let $u(t)$ be the solution of \eqref{eq2}. For semidiscrete problem \eqref{eq3}, there exists $h'>0$ such that for $h<h'$, the unique solution $u^h(t)$  exists  on $[0,T]$. Moreover, the following estimates hold
  \begin{align}
    \Vert A_h^s u^h(t) \Vert \leq& Ct^{\gamma/2-s/2}+C, && 0\leq s\leq 2 \text{ , } 0 \leq t\leq T,\:&\label{bound_uh}
    \\
    \|u(t)-u^h(t)\|_{\mu}&\leq C t^{-1+\gamma/2}\: h^{2-\mu},&& 0\leq \mu\leq 1 \text{ , } 0 \leq t\leq T. &\label{spa_err}
  \end{align}
\end{theorem}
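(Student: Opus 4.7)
The plan is to combine a local existence theorem with a bootstrap argument that upgrades the interval of existence to the full time horizon, using Proposition \ref{pro:LSpa_err} as the engine.

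First I would invoke local well-posedness of the semidiscrete equation \eqref{eq3} via \cite[Theorem 4.1]{ABsPEE}, which provides a maximal existence interval $[0, T_{\max,h})$ for $u^h(t)$. To promote this to the full interval $[0,T]$ and simultaneously obtain \eqref{bound_uh}, I would introduce the stopping time
\begin{equation*}
T_h^{*} = \sup\Bigl\{\, T' \in \bigl[0, \min(T, T_{\max,h})\bigr) : \|u^h(t)\|_{\hat{\gamma}} \leq 2M \text{ for all } t\in[0,T'] \,\Bigr\},
\end{equation*}
where $M = 1 + \sup_{t\in[0,T]} \|u(t)\|_{\hat{\gamma}}$, finite by \eqref{bound_u}. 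On $[0,T_h^{*}]$ the hypothesis \eqref{uniform_error_condi} of Proposition \ref{pro:LSpa_err} is met, so the error bound \eqref{spa_err} is in force there.

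Next I would derive \eqref{bound_uh} on $[0, T_h^{*}]$ directly from the mild formulation
\begin{equation*}
u^h(t) = S_h(t) P_h u_0 + \int_{0}^{t} S_h(t-\tau) f_h(u^h(\tau)) \, d\tau,
\end{equation*}
treating the linear and nonlinear parts separately. For the linear part, Lemma \ref{le:semigroup} combined with \eqref{AhPh} gives $\|A_h^{s/2} S_h(t) P_h u_0\| \leq C t^{(\gamma - s)/2} \|u_0\|_\gamma$ for $s \geq \gamma$ and a bounded estimate for $s \leq \gamma$. For the nonlinear part, I would exploit the discrete multiplicative estimates \eqref{localLips_discrete}--\eqref{BoNLT} with parameters $(s', s_1, s_2)\in \mathcal{B}$ carefully chosen so that the temporal singularity $(t-\tau)^{-(s/2 - s'/2)}$ coming from $A_h^{(s-s')/2} S_h(t-\tau)$ is integrable while $s_1, s_2 \leq \hat{\gamma}$, so that the bootstrap assumption $\|u^h(\tau)\|_{\hat{\gamma}} \leq 2M$ (and more generally any already-established level of \eqref{bound_uh}) controls the right-hand side. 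Iterating $s$ through a finite chain of exponents starting from the trivial $L^2$ bound and closing each step with the generalized Gronwall inequality \cite[Theorem 1.27]{ABsPEE} mirrors the derivation of the continuous estimate \eqref{bound_u} and yields \eqref{bound_uh} for all $0\leq s\leq 2$.

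The estimate \eqref{bound_uh} applied at $s = \hat{\gamma}$ gives $\|u^h(t)\|_{\hat{\gamma}} \leq C_{*}$ on $[0,T_h^{*}]$ with $C_{*}$ independent of $T_h^{*}$ and $h$; by choosing $M$ larger than $C_{*}$ we contradict the maximality of $T_h^{*}$ unless $T_h^{*} = \min(T, T_{\max,h})$. A standard continuation argument, together with the absence of blow-up guaranteed by the uniform bound \eqref{bound_uh}, then forces $T_{\max,h} > T$, so $u^h$ exists on all of $[0,T]$ and both \eqref{bound_uh} and \eqref{spa_err} extend to this interval. The main obstacle is choosing the exponents $(s', s_1, s_2)\in \mathcal{B}$ for the nonlinear estimates so that the bootstrap closes uniformly over the whole range $0\leq s \leq 2$, particularly as $s \to 2$ and in dimension $d=3$, where the constraints in $\mathcal{B}$ are tightest; this is precisely where the refined multiplicative machinery \eqref{localLips_discrete}--\eqref{AhDuufh} developed earlier in the section plays an essential role.
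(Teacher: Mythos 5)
Your overall architecture (local existence, a continuation/stopping-time argument, Proposition \ref{pro:LSpa_err} as the engine) is in the right spirit, but the step that is supposed to close the loop does not work as written. You propose to prove \eqref{bound_uh} on $[0,T_h^*]$ \emph{directly} from the mild formulation by ``mirroring the derivation of the continuous estimate \eqref{bound_u}'' and then to contradict maximality of $T_h^*$ by ``choosing $M$ larger than $C_*$''. Two problems. First, the nonlinearity \eqref{def_f} is quadratic, so the discrete bounds \eqref{localLips_discrete}--\eqref{BoNLT} control $\|A_h^{s/2} f_h(u^h)\|$ by a product of two norms of $u^h$; a Gronwall iteration on the mild formula therefore does not close globally unless you already assume the bootstrap bound $\|u^h\|_{\hat\gamma}\le 2M$, and then the resulting constant $C_*$ depends on $M$ (through exponential Gronwall factors), so ``choose $M>C_*$'' is circular --- $M$ was fixed at the outset as $1+\sup_t\|u(t)\|_{\hat\gamma}$ and cannot be enlarged afterwards without changing $C_*$. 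Second, the continuous bound \eqref{bound_u} is not obtained by a bare Gronwall argument either: global boundedness for the Field--Noyes system rests on structural properties (invariant regions, $L^\infty$ control) that the Galerkin projection does not inherit, so there is nothing to ``mirror''. Tellingly, your argument never uses the hypothesis $h<h'$, yet the theorem genuinely needs it.

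The paper closes the loop differently, and this is the missing idea: it never bounds $u^h$ by itself, only by \emph{comparison with the exact solution}. The local existence theorem \cite[Theorem 4.1]{ABsPEE} gives an existence time $\tau$ depending only on a weak norm $\|A_h^{\varepsilon_1}P_hu_0\|\le M_{u_0}$ of the data, and \cite[Theorem 4.2]{ABsPEE} supplies \eqref{bound_uh} on that local interval. Proposition \ref{pro:LSpa_err} is then applied at the \emph{fixed positive} time $\tau/2$, giving $\|u(\tau/2)-u^h(\tau/2)\|_{2\varepsilon_1}\le C\tau^{-1+\gamma/2}h^{2-2\varepsilon_1}$, which for $h<h_1$ is small enough that $\|A_h^{\varepsilon_1}u^h(\tau/2)\|\le M_{u_0}$ again; since the equation is autonomous, the local theorem restarts from $\tau/2$ with the \emph{same} existence time $\tau$, extending the solution by the fixed increment $\tau/2$, and finitely many such steps cover $[0,T]$. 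Note that the error estimate is deliberately invoked at $t=\tau/2$ rather than near $t=0$, where the factor $t^{-1+\gamma/2}$ would destroy the smallness. If you want to keep your stopping-time formulation, the fix is to replace your direct a priori estimate by this comparison mechanism at a fixed positive restart time.
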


\begin{proof}
    Select a sufficiently small number $\varepsilon_1$ such that $2\varepsilon_1\leq \gamma$. 
    Using \eqref{localLips_discrete} with $(0,2\varepsilon_1,d/2-\varepsilon_1)\in\mathcal{B}$ yields the Lipschitz condition of the nonlinear term in discrete problem \eqref{eq3}.
    \begin{align*}  
        \|f_h(u_h)&-f_h(v_h)\|
        \leq C(\|A_h^{\varepsilon_1}u_h\|+\|A_h^{\varepsilon_1} v_h\|+1)\|A_h^{d/4-\varepsilon_1/2} (u_h-v_h)\|,
    \end{align*}
    where $u_h$, $v_h\in X_h$. 
    By \eqref{bound_u}, \eqref{AhPh} and \eqref{NormEq1}, there exist positive constants $M_1$, $M_2$ such that $\Vert A^{\varepsilon_1} u(t)\Vert + \Vert u(t)\Vert_{2\varepsilon_1} \leq M_1 $  for $t\in [0,T]$, $\Vert A_h^{\varepsilon_1} P_h u_0\Vert\leq M_2 \Vert A^{\varepsilon_1} u_0\Vert$ and $\Vert A_h^{\varepsilon_1} u_h\Vert\leq M_2 \Vert u_h\Vert_{2\varepsilon_1}$ for $u_h\in X_h$. 
    Let $M_2(M_1 + 1)$ be denoted as $M_{u_0}$. Then $\Vert A_h^{\varepsilon_1} P_h u_0\Vert\leq M_{u_0}$. 
    The existence and uniqueness of solutions $u^h(t)$ on $[0,\tau]$ can be established by applying \cite[Theorem 4.1]{ABsPEE} with $\beta=\varepsilon_1$, $\eta=\frac{d}{4}-\frac{\varepsilon_1}{2}$, where $\tau$ depends only on $M_{u_0}$. Furthermore, by employing \cite[Theorem 4.2]{ABsPEE} and noting the boundedness of $\Vert A_h^{\gamma/2}P_h u_0\Vert$, the estimates \eqref{bound_uh} and \eqref{spa_err} remain valid for $0\leq t\leq \tau$. 
    
  Subsequently, using Proposition \ref{pro:LSpa_err} with $\mu=2\varepsilon_1$, we have
  \begin{align*}
      \Vert u(\tau/2)-u^h(\tau/2)\Vert_{2\varepsilon_1}  \leq C \tau^{-1+\gamma/2} h^{2-2\varepsilon_1},
  \end{align*}
  According to the above estimate, there exists $h_1>0$ such that when $h<h_1$, 
  \begin{equation*}
      \|A_h^{\varepsilon_1}u^h({\tau/2})\|\leq M_2(\|u({\tau/2})\|_{2\varepsilon_1}+\|u({\tau/2})-u^h({\tau/2})\|_{2\varepsilon_1})\leq M_{u_0}. 
  \end{equation*}
  Reapplying \cite[Theorem 4.1]{ABsPEE}, we conclude that the problem \eqref{eq3} with $u^h({\tau/2})$ as the initial value has a unique solution on $[0, \tau]$. 
As problem \eqref{eq3} is autonomous, it has a solution with $P_h u_0$ as the initial value on $\left[0, {3\tau}/{2}\right]$. 
Each time, the local solution is extended over the fixed length $\tau/2$ of interval. So, by finite times, the extended interval can cover the given interval $[0,T]$. 
\qed
\end{proof}

\subsection{Some estimates for $u^h(t)$ and $f_h(u^h(t))$}

We now proceed to lay the groundwork for the temporal discretization error analysis in Section \ref{sec:fuldis_erran}. 

\begin{proposition}\label{pro:prop_for_U2}
  Let $u^h(t)$ be the solution of problem \eqref{eq3}. Then it is twice Fr$\acute{\text{e}}$chet differentiable in $(0,T]$, and adheres to the following estimates
  \begin{align}
      \Vert A_h^{s/2} D_t^m u^h(t)\Vert&\leq Ct^{-m+\gamma/2-s/2}+C, &&-2\leq s\leq 2\text{ , }m=0,1,2, &\label{Dt_est}\\
      \Vert A_h^{s/2}(u^h(t)-u^h(0))& \Vert \leq Ct^{\gamma/2-s/2}+C t , &&-2\leq s\leq \gamma.& \label{uh_diff}
  \end{align}
\end{proposition}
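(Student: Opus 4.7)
The plan is to prove \eqref{Dt_est} by induction on $m$, combining the semidiscrete ODE $D_tu^h=A_hu^h+f_h(u^h)$ with the mild representation of $u^h$, the semigroup smoothing of Lemma \ref{le:semigroup}, and the discrete bilinear bounds \eqref{localLips_discrete}--\eqref{AhDuufh}; the companion estimate \eqref{uh_diff} will follow directly from the mild formula applied to $u^h(t)-u^h(0)$. For $m=0$ the range $0\leq s\leq 2$ is exactly \eqref{bound_uh}, and the extension to $-2\leq s<0$ is immediate because the uniform coercivity of $a(\cdot,\cdot)$ on $X_h$ makes $A_h^{s/2}$ uniformly bounded for $s\leq 0$, so the $s=0$ bound already delivers what is needed.

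For $m=1$, differentiating the ODE gives $\|A_h^{s/2}D_tu^h\|\leq\|A_h^{1+s/2}u^h\|+\|A_h^{s/2}f_h(u^h)\|$. In the range $-2\leq s\leq 0$ both pieces are handled directly: the first by the $m=0$ bound with $s+2\in[0,2]$, and the second by \eqref{localLips_discrete} (exploiting $f_h(0)=0$) with $(s,s_1,s_2)\in\mathcal{B}$ so chosen that $s_1,s_2\in(0,\hat\gamma]$ and the $m=0$ bound supplies $\|A_h^{s_i/2}u^h\|$; as a by-product one obtains $\|f_h(u^h(\tau))\|\leq C\tau^{-1+\gamma/2}+C$. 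For $0<s\leq 2$ this splitting fails because $1+s/2>1$ lies beyond the range of \eqref{bound_uh}, so I instead differentiate the mild formula and use the identity $\int_0^tA_hS_h(t-\tau)\,d\tau=S_h(t)-I$ to rewrite
\begin{equation*}
D_tu^h(t)=A_hS_h(t)P_hu_0+S_h(t)f_h(u^h(t))+\int_0^tA_hS_h(t-\tau)\bigl[f_h(u^h(\tau))-f_h(u^h(t))\bigr]\,d\tau.
\end{equation*}
The first two terms are bounded via semigroup smoothing together with $\|A_h^{\gamma/2}P_hu_0\|\leq C$ and the $\|f_h\|$ bound just established; the integral is handled by \eqref{localLips_discrete} applied to the Lipschitz difference in a suitably negative-order norm, whose H\"older modulus in $t$ follows from the $m=1$, $s=0$ case settled first. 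The $m=2$ case is treated analogously from $D_t^2u^h=A_hD_tu^h+D_uf_h(u^h)\,D_tu^h$, with \eqref{AhDufh} handling the nonlinear piece and the same mild-formulation trick for the linear piece when $A_h$ is raised beyond the reach of the $m=1$ estimate.

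Finally, for \eqref{uh_diff} the mild formula yields $u^h(t)-u^h(0)=(S_h(t)-I)P_hu_0+\int_0^tS_h(t-\tau)f_h(u^h(\tau))\,d\tau$. For the first term, Lemma \ref{le:semigroup} gives $\|A_h^{s/2-\gamma/2}(S_h(t)-I)\|_{\mathcal{L}(X_h)}\leq Ct^{\gamma/2-s/2}$ when $\gamma/2-s/2\in[0,1]$, yielding the contribution $Ct^{\gamma/2-s/2}$ for $s\in[\gamma-2,\gamma]$; for $s\in[-2,\gamma-2)$, the uniform boundedness of $A_h^{s/2+1-\gamma/2}$ on $X_h$ combined with $\|A_h^{-1}(I-S_h(t))\|_{\mathcal{L}(X_h)}\leq Ct$ produces the $Ct$ contribution. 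The second term is bounded by semigroup smoothing and $\|f_h(u^h(\tau))\|\leq C\tau^{-1+\gamma/2}+C$, giving terms dominated by $Ct^{\gamma/2-s/2}+Ct$. The main obstacle throughout is the restriction $s_i<3/2$ in $\mathcal{B}$, which forbids bounding $\|A_hf_h(u^h)\|$ directly through \eqref{localLips_discrete}; the integration-by-parts identity above sidesteps this by absorbing the singular contribution $A_hf_h(u^h(t))$ into $S_h(t)f_h(u^h(t))$ plus a commutator integral in which only a Lipschitz difference of $f_h$ appears, so the bilinear estimate need only be invoked at a safely interior value of $s$.
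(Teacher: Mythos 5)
Your architecture diverges from the paper's precisely in the hard range $0<s\le 2$: the paper works with $V(t)=t^lD_t^lu^h(t)$ and the weighted variation-of-constants formula \eqref{Du_voc}, whereas you differentiate the Duhamel formula and use $\int_0^tA_hS_h(t-\tau)\,\mathrm{d}\tau=S_h(t)-I$ to trade the out-of-reach term $A_hf_h(u^h(t))$ for $S_h(t)f_h(u^h(t))$ plus a commutator integral. The identity is correct and your first two terms are fine, but the commutator term does not close as described. To bound $A_h^{s/2}\int_0^tA_hS_h(t-\tau)\bigl[f_h(u^h(\tau))-f_h(u^h(t))\bigr]\mathrm{d}\tau$ you measure the Lipschitz difference in a norm $\Vert A_h^{-r/2}\cdot\Vert$, $r>0$, and take its H\"older modulus from the $m=1$, $s=0$ case. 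By \eqref{localLips_discrete} that requires $(-r,\hat{\gamma},s_2)\in\mathcal{B}$ with $s_2\le 0$, hence $r>d/2-\hat{\gamma}$; since the difference supplies only one power of $t-\tau$, the integrand behaves like $(t-\tau)^{-s/2-r/2}$ near $\tau=t$, integrable only for $s<2-r<2-d/2+\hat{\gamma}$. For $d=3$ and small $\gamma$ this reaches only $s$ slightly above $1/2$, nowhere near $s=2$. The gap propagates: your $m=2$, $s\le0$ step invokes $\Vert A_h^{1+s/2}D_tu^h\Vert$, i.e., the $m=1$ estimate at level $s+2$ up to $2$, so without $s=2$ for $m=1$ the entire $m=2$ case is unavailable. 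What is missing is the iteration the paper performs (``inserting $A_h^{\varepsilon_1}$ \dots\ gradually''): once \eqref{Dt_est} with $m=1$ is known for $s\le s^*$, the H\"older modulus of $u^h$, and hence of $f_h(u^h)$ via \eqref{AhDufh}, becomes available in higher-order norms, permitting a smaller (eventually negative) $r$ and pushing $s^*$ upward step by step until $s=2$ is reached. Your proposal never invokes such a bootstrap, so the estimates for $s$ near $2$ are not actually established.

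Two smaller points. First, in the $m=1$, $s\le0$ step you restrict to $s_1,s_2\in(0,\hat{\gamma}]$; for $d=3$ and $\gamma\le 3/4$ the constraint $s_1+s_2>d/2+s$ then fails already at $s=0$, and one must allow $s_2$ up to $d/2+s-\hat{\gamma}+\varepsilon>\hat{\gamma}$, paying the price $t^{\gamma/2-s_2/2}$ through \eqref{bound_uh}, as the paper does. Second, in \eqref{uh_diff} your bound for $\int_0^tS_h(t-\tau)f_h(u^h(\tau))\,\mathrm{d}\tau$ using only $\Vert f_h(u^h(\tau))\Vert\le C\tau^{-1+\gamma/2}+C$ yields $Ct^{\gamma/2}+Ct$, which for $s<0$ is not dominated by $Ct^{\gamma/2-s/2}+Ct$ (take $\gamma=1$, $s=-1$: $t^{1/2}\not\le Ct$); you must measure $f_h$ in the matching negative-order norm, or simply write $u^h(t)-u^h(0)=t\int_0^1D_tu^h(\xi t)\,\mathrm{d}\xi$ and quote \eqref{Dt_est}, which is the paper's one-line argument.
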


\begin{proof}
    We begin by considering \eqref{Dt_est}. From \eqref{bound_uh}, the formula \eqref{Dt_est} holds for $m=0$. 
    As in \cite[Lemma 3.7]{mukam}, we set $V(t)=t^l D_t^l u^h(t)$ with $l=1,2$, it holds that
    \begin{align*}
        D_t V(t) &= lt^{l-1} D_t^l u^h(t)+ t^l D_t^{l+1} u^h(t) \\
        &= lt^{l-1} D_t^l u^h(t) + t^l \left(A_h D_t^l u^h(t)+D_t^l f_h(u^h(t))\right)\\
        &= A_h V(t) + lt^{l-1} D_t^l u^h(t) + t^l D_t^l f_h(u^h(t)).
    \end{align*}  
    Therefore by variation-of-constants formula, we have
    \begin{equation}\label{Du_voc}
        D_t^l u^h(t)=t^{-l}\int_{0}^{t}S_h(t-\tau)(l\tau^{l-1}D_t^l u^h(\tau) +\tau^l D_\tau^l f_h(u^h(\tau)))\text{d}\tau.
    \end{equation} 
    According to the chain rule, we have
    \begin{equation}\label{chain_rule}
    \begin{aligned}
        D_t f_h(u^h(t)) &=D_u f_h(u^h(t)) D_t u^h(t),\\
        D_{tt} f_h(u^h(t)) &=D_{uu}f_h(u^h(t)) (D_t u^h(t), D_t u^h(t))+D_u f_h(u^h(t)) D_{tt} u^h(t).
	\end{aligned}  
    \end{equation}

    When $m=1$, $-2\leq s\leq 0$. Set $r=\max(s,-\frac{d}{2}+\varepsilon)$ and $r'=\frac{d}{2}+r-\hat{\gamma}+\varepsilon$ such that $(r,\hat{\gamma},r')\in \mathcal{B}$. It follows from \eqref{BoNLT} and \eqref{AhPh} that $\Vert A_h^{s/2}f_h(u^h(t))\Vert\leq Ct^{\gamma/2-r'/2}+C$. Then we have
    \begin{align*}
        \Vert A_h^{s/2} D_t u^h(t)\Vert&\leq  \Vert A_h^{1+s/2} u^h(t)\Vert +\Vert {A_h^{s/2} f_h}(u^h(t))\Vert\\
        &\leq C t^{-1+\gamma/2-s/2}+C.
    \end{align*}
    From the above bound, we know the case of $s=0$. Select sufficiently small $\varepsilon_1$ such that  $(-\frac{d}{2}+2\varepsilon_1,\hat{\gamma},0)\in\mathcal{B}$. For $0<s<2-\frac{d}{2}+2\varepsilon_1$, by \eqref{Du_voc} and \eqref{AhDufh}, we deduce that
    \begin{equation}\label{Du1_res}
        \begin{aligned}
            \quad\:\: \|A_h^{s/2} D_t u^h(t)\|&\leq t^{-1}\int_0^t  \Vert A_h^{s/2}S_h(t-\tau) D_\tau u^h(\tau)\Vert \text{d}\tau
            \\
            &\qquad +t^{-1}\int_0^t \tau \Vert A_h^{s/2} S_h(t-\tau) A_h^{d/4-\varepsilon_1} A_h^{-d/4+\varepsilon_1} D_u f_h(u^h(\tau))D_\tau u^h(\tau))\Vert \text{d}\tau
            \\
            &\leq  Ct^{-1+\gamma/2-s/2}+ C.
        \end{aligned}
    \end{equation}
    With the estimate above, we extend it to $s=2$ by inserting $A_h^{\varepsilon_1}$ in the first term on the right hand side and using $(\min(\varepsilon_1,2+\varepsilon_1-d),\hat{\gamma},2-{d/2}-\varepsilon_1)$, $(\varepsilon_1,\hat{(\gamma)},{d/2}-\varepsilon_1)\in\mathcal{B}$ for the second term gradually. 
    
    When $m=2$, $-2\leq s\leq 0$, we have $ \Vert A_h^{s/2}D_u f_h(u^h(t))D_t u^h(t)\Vert\leq Ct^{-1+\gamma/2-r'/2}$. Then it follow from \eqref{chain_rule} that
    \begin{equation}
        \begin{aligned}
            \Vert A_h^{s/2} D_t^2 u^h(t)\Vert&\leq  \Vert A_h^{1+s/2}D_t u^h(t)\Vert +\Vert A_h^{s/2} D_u f_h(u^h(t))D_t u^h(t)\Vert\\
            &\leq C t^{-2+\gamma/2-s/2}.
        \end{aligned}
    \end{equation}
    For $0<s\leq 2$, by \eqref{Du_voc} and \eqref{chain_rule}, we have
    \begin{equation}\label{Du2_res}
        \begin{aligned}
             \|A_h^{s/2} D_t^2 u^h(t)\|&\leq t^{-2}\int_0^t  \Vert 2\tau A_h^{s/2}S_h(t-\tau) D_\tau^2 u^h(\tau)\Vert \text{d}\tau
            \\
            &\qquad +t^{-2}\int_0^t \tau^2 \Vert A_h^{s/2-r/2} S_h(t-\tau)  A_h^{r/2}D_u f_h(u^h(\tau))D_\tau^2 u^h(\tau)\Vert \text{d}\tau.
            \\
            &\qquad +t^{-2}\int_0^t \tau^2 \Vert A_h^{s/2-\varepsilon_1} S_h(t-\tau)  A_h^{\varepsilon_1} D_{uu}f_h(u^h(\tau))(D_\tau u^h(\tau))^2\Vert \text{d}\tau.
        \end{aligned}
    \end{equation}
    The treatment of the first and second terms is the same as \eqref{Du1_res}. 
    Using \eqref{AhDuufh} with $(2\varepsilon_1,\hat{\gamma},d/2-\varepsilon_1)\in \mathcal{B}$ yields 
    $$
        \Vert A_h^{\varepsilon_1} D_{uu}f_h(u^h(\tau))(D_\tau u^h(\tau))^2 \Vert \leq C t^{-3+\gamma/2}.$$
    Substitute above bound into \eqref{Du2_res} completes the proof for the case of $m=2$. 

    Then we consider \eqref{uh_diff}, which can be immediately derived through Taylor expansion and \eqref{Dt_est}.
    \begin{align*}
        \Vert A_h^{s/2}(u^h(t)-u^h(0))& \Vert \leq t\Vert A_h^{s/2}\int_{0}^{1}D_t u^h(\xi t)\text{d}\xi\Vert \leq C t^{\gamma/2-s/2} +C t.
    \end{align*}
    This completes the proof.
    \qed
\end{proof}

Next we analyze the interpolation error of $f_h(u^h(t))$. 
\begin{proposition}\label{TaylorExp}
	Let $f_h(u^h(t))$ be the nonlinear term of problem \eqref{eq3}. Consider the interval $[t_1,t_2] \subset (0,T]$ with $\tau = t_2-t_1$ and choose $(s,\hat{\gamma},s_2)\in \mathcal{B}$. Then, for  $t \in [t_1,t_2]$, the following estimates hold
	\begin{align}
		\Vert A_h^{s/2} ( f_h(u^h(t))-f_h (u^h(t_1)))\Vert &\leq C t_1^{-1+{\gamma/2}-{s_2/2}}\tau,\label{f_diff1}
		\\
		\left\Vert A_h^{s/2} (f_h(u^h(t))-\frac{t_{2}-t}{\tau}f_h(u^h(t_1))\right.-\left.\frac{t-t_1}{\tau }f_h(u^h(t_2)))\right\Vert &\leq C t_1^{-2+\gamma/2-{s_2/2}}\tau^2. \label{f_diff2}
	\end{align}
\end{proposition}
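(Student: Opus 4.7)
The plan is to derive both bounds by Taylor-expanding the composite map $g(\sigma):=f_h(u^h(\sigma))$ in time around $t_1$, then to control the resulting time-derivatives of $g$ via the chain-rule identities \eqref{chain_rule}, the derivative bounds \eqref{AhDufh}--\eqref{AhDuufh} on $f_h$, and the solution-derivative estimates \eqref{Dt_est} from Proposition \ref{pro:prop_for_U2}.

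For \eqref{f_diff1}, I would begin with the fundamental theorem of calculus,
\begin{equation*}
f_h(u^h(t))-f_h(u^h(t_1)) = \int_{t_1}^{t} D_u f_h(u^h(\sigma))\, D_\sigma u^h(\sigma)\, d\sigma,
\end{equation*}
pull $A_h^{s/2}$ under the integral, and apply \eqref{AhDufh} with the hypothesis triple $(s,\hat{\gamma},s_2)\in\mathcal{B}$. The factor $\Vert A_h^{\hat{\gamma}/2} u^h(\sigma)\Vert$ is uniformly bounded via \eqref{Dt_est} with $m=0$ (since $\hat{\gamma}\leq\gamma$ makes its time exponent nonnegative on $(0,T]$), while $\Vert A_h^{s_2/2} D_\sigma u^h(\sigma)\Vert$ is bounded by $C(\sigma^{-1+\gamma/2-s_2/2}+1)$ via \eqref{Dt_est} with $m=1$. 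Since $\sigma\in[t_1,t]$, the time-singular factor is dominated by its value at $t_1$ and the interval length is $\tau$, which produces the claimed estimate.

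For \eqref{f_diff2}, I would represent the linear-interpolation error through the Peano-kernel form obtained by subtracting the second-order Taylor expansions of $g(t)$ and $g(t_2)$ around $t_1$:
\begin{equation*}
g(t) - \frac{t_2-t}{\tau}g(t_1) - \frac{t-t_1}{\tau}g(t_2) = \int_{t_1}^{t}(t-\sigma)\,g''(\sigma)\,d\sigma - \frac{t-t_1}{\tau}\int_{t_1}^{t_2}(t_2-\sigma)\,g''(\sigma)\,d\sigma,
\end{equation*}
so the task reduces to bounding $\Vert A_h^{s/2} g''(\sigma)\Vert$. Expanding $g''$ via \eqref{chain_rule} yields two terms. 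The piece $D_u f_h(u^h)\, D_\sigma^2 u^h$ is handled exactly as in the previous bound, invoking \eqref{Dt_est} now with $m=2$ to give a singularity of order $\sigma^{-2+\gamma/2-s_2/2}$. For the $D_{uu}$ piece, I would apply \eqref{AhDuufh} with the same triple $(s,\hat{\gamma},s_2)\in\mathcal{B}$ and bound the two copies of $D_\sigma u^h$ by $C(\sigma^{-1+\gamma/2-\hat{\gamma}/2}+1)$ and $C(\sigma^{-1+\gamma/2-s_2/2}+1)$ via \eqref{Dt_est} with $m=1$. Multiplying the two singular factors and absorbing the excess $\sigma^{\gamma/2-\hat{\gamma}/2}$ (bounded on $(0,T]$ since $\hat{\gamma}\leq\gamma$) into the constant again produces $\sigma^{-2+\gamma/2-s_2/2}$. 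Substituting this back into the kernel representation, using that each kernel is bounded by $\tau$ and the interval length is $\tau$, and pulling the singularity out at $\sigma=t_1$, yields the final estimate.

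The main technical obstacle lies in the bookkeeping of fractional powers of $A_h$ for the $D_{uu}$ term: one must ensure that the triple $(s,\hat{\gamma},s_2)$ indeed lies in $\mathcal{B}$ when plugged into \eqref{AhDuufh} (guaranteed by the hypothesis and by the definition of $\hat{\gamma}$), and that the two time singularities recombine into the single clean factor $\sigma^{-2+\gamma/2-s_2/2}$ without loss of sharpness. The $3/2$-cutoff encoded in $\hat{\gamma}$, inherited from Lemma \ref{le:pointmult}, is precisely what allows the mismatch $\sigma^{\gamma/2-\hat{\gamma}/2}$ to be harmlessly absorbed into the constant.
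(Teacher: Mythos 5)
Your proposal is correct and follows essentially the same route as the paper: a temporal Taylor expansion of $g(\sigma)=f_h(u^h(\sigma))$ about $t_1$, with $\Vert A_h^{s/2}g'(\sigma)\Vert$ and $\Vert A_h^{s/2}g''(\sigma)\Vert$ controlled through the chain rule \eqref{chain_rule}, the derivative bounds \eqref{AhDufh}--\eqref{AhDuufh}, and the solution estimates \eqref{Dt_est}, including the observation that the extra factor $\sigma^{\gamma/2-\hat{\gamma}/2}$ from the $D_{uu}$ term is absorbed since $\hat{\gamma}\leq\gamma$. The only cosmetic difference is that you write the interpolation error via Peano-kernel integrals, whereas the paper uses the normalized integral remainder $(t-t_1)^2\int_0^1(1-\xi)D_{tt}f_h(\cdot)\,\mathrm{d}\xi$; the two are equivalent under a change of variables.
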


\begin{proof}
	The proof is based on the Taylor expansion
	\begin{equation}
	\begin{aligned} 
		f_h(u^h(t)) = &\ f_h(u^h(t_1))+(t-t_1)D_t f_h(u^h(t_1)) \\
		&\ +(t-t_1)^2\int_{0}^{1}(1-\xi)D_{tt} f_h(u^h(t_1+\xi(t-t_1)))\text{d}\xi.\label{TayEx}
	\end{aligned}
    \end{equation}
	Using \eqref{AhDufh}, \eqref{Dt_est} and the chain rule \eqref{chain_rule}, we obtain
	\begin{equation}
        \Vert A_h^{s/2} D_t f_h(u^h(t))\Vert 
        \leq C(\Vert A_h^{\hat{\gamma}/2} u^h(t)\Vert+1)\Vert A_h^{s_2/2} D_t u^h(t)\Vert
        \leq C t^{-1+\gamma/2-s_2/2}.   \label{Dtg_e1}
	\end{equation}
	Furthermore, employing \eqref{AhDuufh} yields
	\begin{equation}
		\begin{aligned}
			\Vert A_h^{s/2} D_{tt} f_h(u^h(t))\Vert
			&\leq C\Vert A_h^{\hat{\gamma}/2} D_t u^h(t)\Vert \Vert A_h^{s_2/2} D_t u^h(t)\Vert + C(\Vert A_h^{\hat{\gamma}/2} u^h(t)\Vert+1)\Vert A_h^{s_2/2} D_{tt} u^h(t)\Vert_{s_2}
            \\ 
			&\leq Ct^{-2+\gamma/2-s_2/2+\gamma/2-\hat{\gamma}/2}+Ct^{-2+\gamma/2-s_2/2}+C\\
			&\leq Ct^{-2+\gamma/2-s_2/2}+C. 
		\end{aligned}\label{Dttg_e1}
	\end{equation}
	Substituting \eqref{Dtg_e1} and \eqref{Dttg_e1}  into \eqref{TayEx} leads to the desired results.
\end{proof}

\section{Error analysis for fully discrete scheme}\label{sec:fuldis_erran}
In this section, we develop a fully discrete numerical scheme for the problem \eqref{eq2} and provide error estimates for this scheme.
 
\subsection{Fully discrete scheme and error recursion}
Without loss of generality, we use a fixed time step $\Delta t=T/N$, $N\in \mathbb{N}^+$ and we set $t_k=k\Delta t$, $k=0,1,...,N$. The exponential Runge-Kutta method is adopted to numerically advance the semidiscrete form \eqref{eq3}  from $t_k$ to $t_{k+1}$. Note that the exact solution of \eqref{eq3} can be written as
\begin{align}
    u^h(t_{k+1})=S_h(\Delta t)u^h(t_k)+\int_{t_k}^{t_{k+1}}S_h(t_{k+1}-\tau)f_h(u^h(\tau))\:\text{d}\tau.\label{ERK_S_E}
\end{align}
Giving the numerical solution $u^h_k$ at time $t_k$, if we approximate $f_h(u^h(\tau))$ by $f_h(u^h_k)$ for $\tau\in[t_k, t_{k+1}]$ and calculate the resulting integral exactly, we obtain the first-order exponential Euler scheme
\begin{align}
    \hat{u}^h_{k+1}=S_h(\Delta t)u^h_k+\int_{t_k}^{t_{k+1}}S_h(t_{k+1}-\tau)f_h(u^h_k)\:\text{d}\tau.\label{ERK_S_inner}
\end{align}
The second-order exponential Runge-Kutta method can be obtained by approximating $f_h(u^h(\tau))$ through linear interpolation based on $f_h(u^h_k)$ and $f_h(\hat{u}^h_{k+1})$. Then, the second-order exponential Runge-Kutta method takes the following form
\begin{align}
    u^h_{k+1}=S_h(\Delta t)u^h_k+\int_{t_k}^{t_{k+1}}S_h(t_{k+1}-\tau)\left[\frac{t_{k+1}-\tau}{\Delta t}f_h(u^h_k)+\frac{\tau-t_k}{\Delta t}f_h(\hat{u}^h_{k+1})\right]\text{d}\tau.\label{ERK_S}
\end{align}
Numerical scheme \eqref{ERK_S_inner}-\eqref{ERK_S} constitutes our fully discrete method for problem \eqref{eq2}.  Conventionally, \eqref{ERK_S_inner} is referred to as the inner stage value of the scheme.

Denoting the temporal discretization error as $e_n=u^h_{n}-u^h(t_{n})$ and  subtracting \eqref{ERK_S} by \eqref{ERK_S_E} leads to
\begin{align*}
    e_n=S_h(\Delta t)e_{n-1}+\omega_{n}^{[1]}+\omega_{n}^{[2]}+\omega_{n}^{[3]},
\end{align*}
where $\omega_{n}^{[1]},\omega_{n}^{[2]}$ and $\omega_{n}^{[3]}$ are defined as follows
\begin{align*}
    &\omega_{n}^{[1]}=\int_{t_{n-1}}^{t_{n}}S_h(t_{n}-\tau)\frac{t_{n}-\tau}{\Delta t}\left(f_h(u^h_{n-1})-f_h(u^h(t_{n-1}))\right)\text{d}\tau,
    \\
    &\omega_{n}^{[2]}=\int_{t_{n-1}}^{t_{n}}S_h(t_{n}-\tau)\frac{\tau-t_{n-1}}{\Delta t}\left(f_h(\hat{u}^h_{n})-f_h(u^h(t_{n}))\right)\text{d}\tau,
    \\
    &\omega_{n}^{[3]}=\int_{t_{n-1}}^{t_{n}}S_h(t_{n}-\tau)\left[\frac{t_{n}-\tau}{\Delta t}f_h(u^h(t_{n-1}))+\frac{\tau-t_{n-1}}{\Delta t}f_h(u^h(t_{n})) - f_h(u^h(\tau))\right]\text{d}\tau.
\end{align*}
Solving this recursion with initial condition $e_0=0$ gives
\begin{equation}
    e_n=\sum_{k=0}^{n-1}S_h(t_{n-k-1})\omega_{k+1}^{[1]}+\sum_{k=0}^{n-1}S_h(t_{n-k-1})\omega_{k+1}^{[2]}+\sum_{k=0}^{n-1}S_h(t_{n-k-1})\omega_{k+1}^{[3]}.\label{error_recursion}
\end{equation}

\subsection{Error estimate} 
We proceed with the convergence analysis of the fully discrete scheme in the space $H^\mu$ ($0\leq \mu \leq \gamma$), investigating the order reduction induced by nonsmooth initial data. 
The stability of the numerical scheme relies on Lipschitz condition \eqref{localLips_discrete}, which necessitates boundedness of the discrete solution in an appropriate function space. 
We incorporate the boundedness analysis into convergence analysis. 
Through mathematical induction, we establish both boundedness and general $H^\mu$ estimates. Before stating the theorem, we define $\sigma(r)=\min(\hat{\gamma}+r-d/2,\:\hat{\gamma})-\varepsilon$ for $r\in [0,3/2)$, which ensures that $(-r,\hat{\gamma},-\sigma(r))\in \mathcal{B}$, while minimizing the value of $-\sigma(r)$.

\begin{theorem}\label{the:boundedness}
Let $u^h(t)$ and $u^h_n$ be the solution of problem \eqref{eq3} and discrete scheme \eqref
{ERK_S_inner}-\eqref{ERK_S} at time $t_n$.  
Let $ \mu \in [0,\max(1,\hat{\gamma})]$. Select $r\geq 0$ such that $\mu+r\leq 2$, $(-r,\hat{\gamma},\mu)\in \mathcal{B}$ and $\sigma(r)\geq 0$. 
Then there exists $\Delta t'>0$ such that for $\Delta t < \Delta t'$, we have
\begin{equation*}
    \|u^h_n-u^h(t_n)\|_\mu \leq C t_n^{-\mu/2-r/2}\Delta t^{\min(1+\gamma/2+\sigma(r)/2,\:2)}, \quad n=1,...,N,
\end{equation*}
where $C$ is independent of $h$, $\Delta t$ and $n$.  
\end{theorem}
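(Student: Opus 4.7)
The plan is to argue by strong induction on $n$, simultaneously tracking the fully discrete error in $\|\cdot\|_\mu$ and maintaining a uniform bound $\|A_h^{\hat{\gamma}/2}u^h_k\|\leq C$ which is required to activate the Lipschitz estimate \eqref{localLips_discrete}. Because of the nonsmooth initial data the claimed right-hand side $t_n^{-\mu/2-r/2}\Delta t^{\min(1+\gamma/2+\sigma(r)/2,2)}$ is singular at $t=0$, so $\Delta t'$ must be chosen small enough that this quantity stays under a fixed threshold uniformly in $n$; the hypotheses $\sigma(r)\geq 0$, $\mu+r\leq 2$, and $(-r,\hat{\gamma},\mu)\in\mathcal{B}$ are crafted precisely to ensure the product $\|e_k\|_{\hat{\gamma}}$ is small enough at the first grid point $t_1=\Delta t$ for the bootstrap to close.

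As a preparatory step I would estimate the inner-stage error. Subtracting \eqref{ERK_S_inner} from \eqref{ERK_S_E} gives $\hat{u}^h_n-u^h(t_n)=S_h(\Delta t)(u^h_{n-1}-u^h(t_{n-1}))+\int_{t_{n-1}}^{t_n}S_h(t_n-\tau)(f_h(u^h_{n-1})-f_h(u^h(\tau)))\mathrm{d}\tau$; applying \eqref{f_diff1} of Proposition \ref{TaylorExp} to the consistency part and \eqref{localLips_discrete} to the error inherited from $e_{n-1}$ yields a bound on $\|A_h^{\mu/2}(\hat{u}^h_n-u^h(t_n))\|$ that will feed the analysis of $\omega_n^{[2]}$. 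Applying $A_h^{\mu/2}$ to the recursion \eqref{error_recursion}, I would then treat the three sums individually. For the consistency term $\omega_{k+1}^{[3]}$, I would factor $A_h^{\mu/2}S_h(t_{n-k-1})=A_h^{(\mu+r)/2}S_h(t_{n-k-1})\cdot A_h^{-r/2}$, use $\|A_h^{(\mu+r)/2}S_h(t_{n-k-1})\|_{\mathcal{L}(X_h)}\leq C t_{n-k-1}^{-(\mu+r)/2}$ from Lemma \ref{le:semigroup}, and apply \eqref{f_diff2} with $s=-r$ and $s_2$ chosen so that $(-r,\hat{\gamma},s_2)\in\mathcal{B}$ with $-s_2=\sigma(r)$; this produces an integrand bound of order $t_k^{-2+\gamma/2+\sigma(r)/2}\Delta t^2$. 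Summing over $k$, splitting the range at $k\approx n/2$, and integrating directly over the terminal interval yields $Ct_n^{-\mu/2-r/2}\Delta t^{\min(1+\gamma/2+\sigma(r)/2,2)}$, with the minimum arising from whether $\int_0^{t_n}\tau^{-2+\gamma/2+\sigma(r)/2}\mathrm{d}\tau$ is convergent or must be saturated at the upper endpoint. For $\omega_{k+1}^{[1]}$ and $\omega_{k+1}^{[2]}$, the Lipschitz estimate \eqref{localLips_discrete} with parameters $(-r,\hat{\gamma},\mu)$ converts the integrands to $\|A_h^{\mu/2}e_k\|$ and $\|A_h^{\mu/2}(\hat{u}^h_n-u^h(t_n))\|$ respectively; combining with the induction hypothesis and the inner-stage bound closes the recursion via a discrete Gronwall-type argument in the spirit of \cite[Theorem 1.27]{ABsPEE}.

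The main obstacle will be the bootstrap for $\|A_h^{\hat{\gamma}/2}u^h_k\|$: the Lipschitz inequality applies only while $u^h_k$ remains in a fixed neighborhood of $u^h(t_k)$, yet the induction hypothesis delivers only the singular bound $\|e_k\|_{\hat{\gamma}}\leq C t_k^{-\hat{\gamma}/2-r/2}\Delta t^{\min(1+\gamma/2+\sigma(r)/2,2)}$, so the $\Delta t$ power must dominate the singularity at $t_k=\Delta t$, i.e.\ $\min(1+\gamma/2+\sigma(r)/2,2)\geq \hat{\gamma}/2+r/2$. The admissibility conditions $\sigma(r)\geq 0$, $\mu+r\leq 2$, $(-r,\hat{\gamma},\mu)\in\mathcal{B}$, together with $\mu\leq\max(1,\hat{\gamma})$, are exactly what is needed to guarantee this inequality. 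A secondary technical burden is verifying that every auxiliary parameter triple invoked when applying Lemma \ref{le:pointmult} and the discrete estimates \eqref{localLips_discrete}--\eqref{AhDuufh} lies in $\mathcal{B}$; the $d/2$ threshold appearing in the definitions of $\mathcal{B}$ and $\sigma(r)$ makes the three-dimensional case the most restrictive, and it is there that the parameter margins are narrowest.
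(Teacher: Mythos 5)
Your overall architecture coincides with the paper's: induction that simultaneously propagates the $H^\mu$ error bound and the uniform $H^{\hat{\gamma}}$ bound needed to activate \eqref{localLips_discrete}, the three-term splitting of \eqref{error_recursion}, the semigroup factorization $A_h^{\mu/2}S_h=A_h^{(\mu+r)/2}S_h\,A_h^{-r/2}$ with \eqref{f_diff2} at $s=-r$ for the consistency term, and a discrete Gronwall argument to close. However, there is one genuine gap in your treatment of $\omega^{[2]}$.

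You propose to control the inner-stage error $E_k=\hat{u}^h_{k+1}-u^h(t_{k+1})$ in the norm $\|A_h^{\mu/2}\cdot\|$ and then feed that into $\omega^{[2]}$ via \eqref{localLips_discrete} with the triple $(-r,\hat{\gamma},\mu)$. The paper instead measures $E_k$ in the \emph{negative}-order norm $\|A_h^{-\sigma(r)/2}\cdot\|$, using the triple $(-r,\hat{\gamma},-\sigma(r))\in\mathcal{B}$, which is precisely why $\sigma(r)$ is introduced and why the hypothesis $\sigma(r)\geq 0$ appears (it makes $\sigma(\sigma(r))$ well defined for the defect estimate). The difference is not cosmetic: the defect $\Delta_k=\int_{t_k}^{t_{k+1}}S_h(t_{k+1}-\tau)(f_h(u^h(\tau))-f_h(u^h(t_k)))\,\mathrm{d}\tau$ can be bounded by $C\,t_k^{-1+\gamma/2+\sigma(\sigma(r))/2}\Delta t^2$ in the $-\sigma(r)$ norm because no positive power of $A_h$ has to be absorbed by the inner semigroup; if you insist on the $\mu$-norm you must insert $A_h^{(\mu+r')/2}S_h(t_{k+1}-\tau)$ and pay a factor $\Delta t^{1-\mu/2-r'/2}$ from the inner integral. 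In two dimensions or for larger $\gamma$ this loss happens to be recoverable, but in the regime the paper singles out as the hard one ($d=3$, small $\gamma$, e.g.\ $\gamma=1/2$, $\mu=0$, $r=1.2$) your route yields a contribution of order $\Delta t^{2-r'/2+\gamma/2+\sigma(r')/2}$ with $r'<3/2$, which is strictly worse than the claimed $\Delta t^{\min(1+\gamma/2+\sigma(r)/2,\,2)}$. As sketched, your argument therefore proves a weaker statement than the theorem in three dimensions.

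Two smaller points. First, in $\omega^{[3]}$ the term $k=0$ cannot be handled by \eqref{f_diff2}, whose right-hand side degenerates like $t_k^{-2+\cdots}$ at $t_0=0$; the paper treats the first subinterval separately with \eqref{uh_diff}, and your "split at $k\approx n/2$" does not address this. Second, the bootstrap inequality $\min(1+\gamma/2+\sigma(r)/2,\,2)>\hat{\gamma}/2+r/2$ is not automatic for every admissible pair $(\mu,r)$; it must be verified for one specific choice at $\mu=\hat{\gamma}$ (the paper takes $r=\min(2-\hat{\gamma},\,3/2-\varepsilon)$ and checks the sign directly from the definition of $\sigma$), and the inequality must be strict so that the error actually tends to zero as $\Delta t\to 0$ rather than merely staying bounded.
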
 

\begin{proof}
According to \eqref{bound_uh}, there exist $M>0$, such that $\Vert u^h(t)\Vert_{\hat{\gamma}}\leq M$. It is essential to prove that the discrete solution $u^h_n$ remains bounded in an appropriate function space. We use mathematical induction to prove this. 
First, for $k=0$, $\Vert u^h_0\Vert_{\hat{\gamma}}=\|u^h(0)\|_{\hat{\gamma}}\leq M+1$. 
Next, making the induction hypothesis that for $k\leq n-1$, $\Vert u^h_k\Vert_{\hat{\gamma}}\leq M+1$, 
we aim to show that this bound also holds for $k=n$ as long as $\Delta t$ is sufficiently small.
Noting that
\begin{equation*}
	\|u^h_n\|_{\hat{\gamma}}\leq\|u^h(t_n)\|_{\hat{\gamma}}+\|u^h_n-u^h(t_n)\|_{\hat{\gamma}},
\end{equation*}	
it suffices to demonstrate $\|u^h_n-u^h(t_n)\|_{\hat{\gamma}}$ converges.

Consider the first term.  Under induction hypothesis, it follows from \eqref{bound_u} and \eqref{localLips_discrete} that
\begin{align}\label{Lips_the2}
	\|A_h^{-r/2}(f_h(u^h_k)-f_h(u^h(t)))\|\leq C\|u^h_k-u^h(t)\|_{\mu},\qquad  k \leq n-1. 
\end{align}
Using above estimate and the semigroup properties, it holds that
\begin{equation}\label{ERK_E_1_R}
    \begin{aligned}
        \left\|\sum_{k=0}^{n-1}S_h(t_{n-k-1})\omega_{k+1}^{[1]}\right\|_\mu
        &\leq C\sum_{k=0}^{n-1}\int_{t_k}^{t_{k+1}}\Vert A_h^{\mu/2} S_h(t_n-\tau)A_h^{r/2}A_h^{-r/2}(f_h(u^h_k)-f_h(u^h(t_k)))\Vert \text{d}\tau
        \\
        &\leq C\sum_{k=0}^{n-1}\int_{t_k}^{t_{k+1}}(t_{n}-\tau)^{-\mu/2-r/2}\Vert e_k \Vert_\mu \text{d}\tau
        \\
        &\leq C\Delta t\sum_{k=0}^{n-2}t_{n-k-1}^{-{\mu}/2-r/2}\|e_k\|_\mu +C\Delta t^{1-\mu/2-r/2} \|e_{n-1}\|_\mu
        \\
        &\leq C\Delta t\sum_{k=1}^{n-1}t_{n-k}^{-\mu/2-r/2}\|e_k\|_\mu.
    \end{aligned}
\end{equation}	
	
Consider the second term of \eqref{error_recursion}. Note that under the induction hypothesis,
the inner stage value $\|\hat{u}^h_{k+1}\|_{\mu}$ is bounded for $k \leq n-1$.
Then we can bound $\omega^{[2]}_{k+1}$ by following the same procedure as for $\omega^{[1]}_{k+1}$.
	\begin{equation}
		\left\|\sum_{k=0}^{n-1}S_h(t_{n-k-1})\omega_{k+1}^{[2]}\right\|_{{\mu}}\leq C\Delta t\sum_{k=0}^{n-1}t_{n-k}^{-{\mu}/2-r/2}\|A_h^{-\sigma(r)/2}E_{k}\|\label{ERK_E_2}.
	\end{equation}
Here $E_k = \hat{u}^h_{k+1}-u^h(t_{k+1})$ is referred to as the inner stage error. The variation of constants formula yields
\begin{equation}
    \begin{aligned}E_{k}&=S_h(\Delta t)e_k+\int_{t_k}^{t_{k+1}}S_h(t_{k+1}-\tau)\left(f_h(u^h_{k})-f_h(u^h(\tau))\right) \text{d}\tau
        \\
        &=S_h(\Delta t)e_k+\int_{t_k}^{t_{k+1}}S_h(t_{k+1}-\tau)\left(f_h(u^h_{k})-f_h(u^h(t_k))\right) \text{d}\tau-\Delta_k,\end{aligned}\label{ERK_E_EK}
\end{equation}
where
\begin{equation*}
	\Delta_k=\int_{t_k}^{t_{k+1}}S_h(t_{k+1}-\tau)\left(f_h(u^h(\tau))-f_h(u^h(t_k))\right)\text{d}\tau.
\end{equation*}
The condition $\sigma(r)\geq 0$ implies that $\sigma(\sigma(r))$ is well-defined. 
Then we can bound $\Delta_k$ by using \eqref{uh_diff} and \eqref{f_diff1} with $s = -\sigma(r)$. Specifically, for $k=0$, 
\begin{equation*}
	\|A_h^{-\sigma(r)/2}\Delta_0\|\leq C\int_0^{\Delta t}  \tau^{\gamma/2+\sigma(\sigma(r))/2}\:\text{d}\tau\leq C\Delta t^{\min(1+\gamma/2+\sigma(\sigma(r))/2,\:2)}.
\end{equation*}
For $0<k<n$,
\begin{align*}
	\|A_h^{-\sigma(r)/2}\Delta_{k}\|&\leq C t_k^{-1+\gamma/2+\sigma(\sigma(r))/2}\Delta t^{2}.
\end{align*}
Returning to \eqref{ERK_E_EK} and using Lipschitz condition \eqref{Lips_the2} yields
\begin{equation}
\begin{aligned}
	\|A_h^{-\sigma(r)/2} E_0\| & \leq C\Delta t^{\min(1+\gamma/2+\sigma(\sigma(r))/2,\:2)}, \\
	\|A_h^{-\sigma(r)/2} E_k\| & \leq C(1+\Delta t^{1-r/2+\sigma(r)/2})\|e_k\|_{\mu}+ C t_k^{-1+\gamma/2+\sigma(\sigma(r))/2}\Delta t^{2},\; 0<k<n,
	\label{ERK_E_EK_R}
\end{aligned}
\end{equation}
where by definition $r-\sigma(r)<2$. 
Plugging \eqref{ERK_E_EK_R} into \eqref{ERK_E_2}, we obtain
\begin{equation}
\begin{aligned}
    \left\|\sum_{k=0}^{n-1}S_h(t_{n-k-1})\omega_{k+1}^{[2]}\right\|_{\mu}
    &\leq C\Delta t\sum_{k=1}^{n-1}t_{n-k}^{-{\mu}/2-r/2}\|e_{k}\|_{\mu}+C \Delta t^{3} \sum_{k=1}^{n-1}t_{n-k}^{-{\mu}/2-r/2}t_k^{-1+\gamma/2+\sigma(\sigma(r))/2}.
\end{aligned}\label{ERK_E_2_R}
\end{equation}

Finally, we consider the last term of \eqref{error_recursion}. For $k=0$, using \eqref{uh_diff} with $s=-r$, we have 
	\begin{equation}
		\begin{aligned}
			\|S_h(t_{n-1})\omega^{[3]}_1\|_{\mu}
			&\leq C\int_0^{\Delta t}\left(t_{n}-\tau\right)^{-\mu/2-r/2}\Delta t^{\min(\gamma/2+\sigma(r)/2,\:1)} \text{d}\tau\\
			&\leq C t_{n-1}^{-\mu/2-r/2} \Delta t^{\min(1+\gamma/2+\sigma(r)/2,\:2)}.
		\end{aligned}\label{ERK_E_3_1}
	\end{equation}
	For $0<k<n$, using \eqref{f_diff2} with $s=-r$, it holds that
	\begin{equation}
		\begin{aligned}
			\quad\:\:\left\|\sum_{k=1}^{n-1}S_h(t_{n-k-1})\omega^{[3]}_{k+1}\right\|_{\mu}
			&\leq C\sum_{k=1}^{n-1}\int_{t_{k}}^{t_{k+1}}\left(t_{n}-\tau\right)^{-{\mu}/2-r/2}\Delta t^{2}t_{k}^{-2+\gamma/2+\sigma(r)/2}\text{d}\tau
			\\
			&\leq C\Delta t^{3}\sum_{k=1}^{n-2}t_{n-k-1}^{-\mu/2-r/2} t_k^{-2+\gamma/2+\sigma(r)/2}+ C \Delta t^{3-\mu/2-r/2} t_{n-1}^{-2+\gamma/2+\sigma(r)/2}
            \\
            &\leq C\Delta t^{3}\sum_{k=1}^{n-1}t_{n-k}^{-\mu/2-r/2} t_k^{-2+\gamma/2+\sigma(r)/2}.
		\end{aligned}\label{ERK_E_3_2}
	\end{equation}
	The bound \eqref{ERK_E_3_1} can be merged with the above estimate.

	Combining the estimates \eqref{ERK_E_1_R}, \eqref{ERK_E_2_R} and \eqref{ERK_E_3_2}, using \cite[Lemma 6.1]{IRK2} to bound the summation terms, it follows that
	\begin{align*}
		\|e_n\|_{\mu}&\leq C\Delta t\sum_{k=1}^{n-1}t_{n-k}^{-\mu/2-r/2}\|e_k\|_{\mu}+ Ct_n^{-\mu/2-r/2} \Delta t^{\min(1+\gamma/2+\sigma(r)/2,\:2)},
	\end{align*}
    where the second term on the right hand side of \eqref{ERK_E_2_R} is absorbed due to $\sigma(r)-\sigma(\sigma(r))<2$. 
	Applying the Gronwall inequality (see \cite[Lemma 6.2]{IRK2}) yields
	\begin{align*}
		\|e_n\|_{\mu} = \|u^h_n-u^h(t_n)\|_{\mu} \leq Ct_n^{-\mu/2-r/2} \Delta t^{\min(1+\gamma/2+\sigma(r)/2,\:2)}.
	\end{align*}
    When $\mu=\hat{\gamma}$, we set $r=\min(2-\hat{\gamma},3/2-\varepsilon)$. It follows from the definition of $\sigma$ that $1+\frac{\gamma}{2}+\frac{\sigma(r)}{2}-\frac{\hat{\gamma}}{2}-\frac{r}{2}>0$. This guarantees convergence of $\|u^h_n-u^h(t_n)\|_{\hat{\gamma}}$ and therefore the numerical solution remains bounded in $H^{\hat{\gamma}}$. 
    The proof is completed. 
    \qed
\end{proof}

\subsection{Optimal error estimates in $L^2$ and $H^1$ norms}
According to Theorem \ref{the:boundedness}, to achieve sharp convergence rates in both $L^2$ and $H^1$ norms, the parameter $r$ must be chosen to maximize $\sigma(r)$ while simultaneously minimizing $r$ itself. To satisfy this requirement we introduce
\begin{equation}\label{def_q}
    q=\left\{\begin{aligned}
        &\min(-d/2,-\gamma)+\varepsilon,  &&0< \gamma\leq 1,&\\
        &\min(-2+\gamma,-2+\gamma+\hat{\gamma}-d/2-\varepsilon), && 1< \gamma< 2,&
    \end{aligned}\right.
\end{equation}
which provides an approximately optimal to $-r$. 
Moreover, the condition $r + \mu < 2$ in Theorem \ref{the:boundedness} becomes restrictive when $\mu = 1$ and $d=3$. 
We overcome this limitation in the next theorem. 
\begin{theorem}\label{the:tempo_err}
Let $u^h(t)$ and $u^h_n$ denote the solutions to problem \eqref{eq3} and the discrete scheme \eqref{ERK_S_inner}–\eqref{ERK_S} at time  $t_n$, respectively. Then there exists $\Delta t'>0$ such that for $\Delta t < \Delta t'$, we have
\begin{align}
    \|u^h_n-u^h(t_n)\| &\leq C t_n^{q/2}\Delta t^{\min(1+\gamma-\varepsilon,\:2)}, \qquad t_n\in (0,T],\label{tempo_optErr_L2}\\
    \|u^h_n-u^h(t_n)\|_1 &\leq C t_n^{q/2-1/2}\Delta t^{\min(1+\gamma-\varepsilon,\:2)}, \qquad t_n\in (0,T], \label{tempo_optErr_H1}
\end{align}
where $C$ is independent of $h$, $\Delta t$ and $n$, and $\varepsilon$ denotes a sufficiently small positive number.
\end{theorem}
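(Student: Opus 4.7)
The plan is to derive both bounds as corollaries of Theorem \ref{the:boundedness} with the (essentially) optimal choice $r = -q$, together with a bootstrap step in the single configuration where the hypotheses of that theorem fail to apply directly.

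For the $L^2$ estimate \eqref{tempo_optErr_L2}, I would set $\mu = 0$ and $r = -q$. A case analysis based on \eqref{def_q} (distinguishing $0 < \gamma \leq 1$ from $1 < \gamma < 2$ and running through $d \in \{1,2,3\}$) verifies each hypothesis of Theorem \ref{the:boundedness}: $r \geq 0$, $r + \mu \leq 2$, $(-r, \hat{\gamma}, 0) \in \mathcal{B}$, and $\sigma(r) \geq 0$. The piecewise definition of $q$ is engineered precisely so that $\min(1 + \gamma/2 + \sigma(r)/2,\, 2) \geq \min(1 + \gamma - \varepsilon,\, 2)$ after renaming $\varepsilon$. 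Together with the identification $t_n^{-r/2} = t_n^{q/2}$, this yields \eqref{tempo_optErr_L2} immediately.

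For the $H^1$ estimate \eqref{tempo_optErr_H1} I would first try the same scheme with $\mu = 1$ and $r = -q$. Whenever $-q \leq 1$, which covers $d = 1, 2$ (for every admissible $\gamma$) and $d = 3$ with $\gamma$ large enough (roughly $\gamma \geq 5/4$), the constraint $r + \mu \leq 2$ is still satisfied and Theorem \ref{the:boundedness} delivers the bound via $t_n^{-1/2 - r/2} = t_n^{q/2 - 1/2}$. The one remaining configuration is $d = 3$ together with small $\gamma$, and this is the main obstacle: here $-q > 1$, so the semigroup kernel $(t_n - \tau)^{-(1+r)/2}$ appearing in the analogue of \eqref{ERK_E_1_R} is no longer integrable near $\tau = t_n$, and the proof of Theorem \ref{the:boundedness} breaks down verbatim.

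To circumvent this, I would repeat the error-recursion analysis at the $\mu = 1$ level, but in the Lipschitz step invoke \eqref{localLips_discrete} with a triple $(-r, \hat{\gamma}, \mu') \in \mathcal{B}$ in which $\mu'$ is a small positive parameter. A small but positive $\mu'$ lets $r$ be chosen strictly below $1$, restoring integrability of the semigroup kernel, at the cost of replacing $\|e_k\|_1$ by the weaker $\|e_k\|_{\mu'}$ on the right-hand side. The latter is in turn supplied by Theorem \ref{the:boundedness} applied at level $\mu'$ and $r = -q$, whose constraint $\mu' + r \leq 2$ is now comfortably satisfied. The truncation and inner-stage-defect pieces associated with $\omega_{k+1}^{[3]}$ and $\Delta_k$ are controlled exactly as in Theorem \ref{the:boundedness} using Proposition \ref{TaylorExp} and \eqref{uh_diff}, and a discrete Gronwall inequality (\cite[Lemma 6.2]{IRK2}) closes the recursion with the target weight $t_n^{q/2 - 1/2}$. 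The delicate point, and the one that dictates the piecewise form of $q$ in \eqref{def_q}, is striking a simultaneous balance between the integrability threshold of the semigroup kernel at $\tau \to t_n$ and the admissibility condition in $\mathcal{B}$ needed for the Lipschitz step; only in three dimensions with low-regularity data does this balance require the intermediate-norm bootstrap just described.
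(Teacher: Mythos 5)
Your treatment of the $L^2$ bound and of the $H^1$ bound in the regime $-q\leq 1$ coincides with the paper's (Theorem \ref{the:boundedness} with $\mu=0$ or $\mu=1$ and $r=-q$), and you correctly isolate the obstruction to $d=3$ with $0<\gamma\lesssim 5/4$. However, your remedy for that hard case has a genuine gap: the intermediate-norm bootstrap only repairs the \emph{stability} term $\omega^{[1]}_{k+1}$ (and the $e_k$-part of $\omega^{[2]}_{k+1}$), where replacing $\|e_k\|_1$ by $\|e_k\|_{\mu'}$ indeed lets you take $r_1<1$. It does nothing for the \emph{consistency} term $\omega^{[3]}_{k+1}$, which you propose to control ``exactly as in Theorem \ref{the:boundedness}.'' There the attainable rate is $\Delta t^{\min(1+\gamma/2+\sigma(r)/2,\,2)}$ with $\sigma(r)=\hat{\gamma}+r-3/2-\varepsilon$, so reaching the claimed order $1+\gamma-\varepsilon$ forces $r\approx 3/2$; but at $\mu=1$ that produces the kernel $(t_n-\tau)^{-1/2-r/2}\approx(t_n-\tau)^{-5/4}$, whose discrete convolution is not summable (already the $k=n-1$ term contributes $\Delta t\cdot\Delta t^{-5/4}$). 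Conversely, taking $r=r_1<1$ to keep the kernel integrable yields $\sigma(r_1)<\gamma-1/2$, hence only $\Delta t^{\,1+\gamma+r_1/2-3/4-\varepsilon}<\Delta t^{\,1+\gamma-1/4}$ from $\omega^{[3]}$ --- a loss of at least a quarter order that cannot be traded against the (less singular) weight $t_n^{-1/2-r_1/2}$, since that would require $(t_n/\Delta t)^{3/4-r_1/2}\leq C$ uniformly in $n$. The same tension appears in the $\Delta_k$ piece of the inner-stage error.

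The paper's actual device is different: it runs the recursion for the \emph{time-weighted} error $t_ne_n=\Delta t\sum_{k=1}^{n-1}S_h(t_{n-k})e_k+\sum_{k=0}^{n-1}S_h(t_{n-k-1})t_{k+1}\omega_{k+1}$. The first sum is controlled by the already-proved $L^2$ bound \eqref{tempo_optErr_L2} through the integrable smoothing $\|A_h^{1/2}S_h(t)\|_{\mathcal{L}(X)}\leq Ct^{-1/2}$ (this is where a lower-norm bound is actually recycled), while the extra factor $t_{k+1}$ in the second sum softens the singularity $t_k^{-2+\gamma/2+\sigma(r_1)/2}$ of the quadrature defect by one power of $t_k$; combined with $\Delta t\leq t_k$ this converts the rate deficit into a harmless weight deficit and lets $r_1<1$ suffice. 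Without this weighting (or an equivalent device) your argument does not close for $d=3$, $\gamma<5/4$.
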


\begin{proof}
    Using Theorem \ref{the:boundedness} with $\mu=0$ and $r=-q$ yields formula \eqref{tempo_optErr_L2}. When $q-1>-2$, the formula \eqref{tempo_optErr_H1} follows from Theorem \ref{the:boundedness} with $\mu=1$ and $r=-q$. 
    It remains to prove the case of $q-1\leq -2$. 
    
    In this case, according to the definitions of $q$ and $\sigma(r)$, it follows that
    \begin{equation}\label{the3_case2_condi}
        d=3 \text{ , } 0<\gamma\leq\frac{5}{4}\text{ , }  \sigma(r)=\gamma+r-\frac{3}{2}-\varepsilon. 
    \end{equation}
    Denote $\omega_n=\omega_{n}^{[1]}+\omega_{n}^{[2]}+\omega_{n}^{[3]}$. Following an error recursion analogous to \eqref{error_recursion}, we obtain
    \begin{align*}
        t_n e_n&=S_h(\Delta t)t_{n-1}e_{n-1}+\Delta t S_h(\Delta t)e_{n-1}+t_n\omega_{n}\\
        &=\Delta t \sum^{n-1}_{k=1} S_h(t_{n-k})e_k + \sum_{k=0}^{n-1}  S_h(t_{n-k-1}) t_{k+1} \omega_{k+1}.
    \end{align*}
    Using the bound \eqref{tempo_optErr_L2} yields
    \begin{equation*}
        \begin{aligned}
            \left\Vert \Delta t \sum^{n-1}_{k=1} S_h(t_{n-k})e_k \right\Vert_1 &\leq C \Delta t \sum^{n-1}_{k=1} t_{n-k}^{-1/2} t_k^{q/2}\Delta t^{\min(1+\gamma-\varepsilon,2)}
            \\
            &\leq C t_n^{q/2+1/2}\Delta t^{\min(1+\gamma-\varepsilon,2)}.
        \end{aligned}
    \end{equation*}
    Select $r_1>0$ such that $(-r_1,\hat{\gamma},1)\in \mathcal{B}$, $r_1+1<2$ and $-\sigma(r_1)\geq 0$. 
    We need to bound $\Delta_k$ in \eqref{ERK_E_EK}. Here $\sigma(\sigma(r_1))$ may not be well-defined, so we absorb $A_h^{-\sigma(r_1)/2}$ by the semigroup properties. 
    Using \eqref{uh_diff} and \eqref{f_diff1} with $s = 0$, we get
    \begin{align*}
        \|A_h^{-\sigma(r_1)/2}\Delta_0\|\leq C\int_0^{\Delta t}& (t_1-\tau)^{\sigma(r_1)/2} \tau^{\gamma/2+\sigma(0)/2}\:\text{d}\tau\leq C\Delta t^{\min(1+\gamma/2+\sigma(r_1)/2+\sigma(0)/2,\:2)},\\
        \|A_h^{-\sigma(r_1)/2}\Delta_{k}\|&\leq C t_k^{-1+\gamma/2+\sigma(0)/2}\Delta t^{2+\sigma(r_1)/2}, \qquad 0<k<n.
    \end{align*}
    Then following the same procedure for deriving \eqref{ERK_E_2_R}, we arrive at
    \begin{equation*}
    \begin{aligned}
        \left\|\sum_{k=0}^{n-1}S_h(t_{n-k-1})t_{k+1}\omega_{k+1}^{[2]}\right\|_{1}
        &\leq C\Delta t\sum_{k=1}^{n-1}t_{n-k}^{-{1}/2-r_1/2}t_k\|e_{k}\|_{1}+C \Delta t^{3+\sigma(r_1)/2} \sum_{k=1}^{n-1}t_{n-k}^{-{1}/2-r_1/2}t_k^{\gamma/2+\sigma(0)/2}.
    \end{aligned}
    \end{equation*}
    Combining the estimates \eqref{ERK_E_1_R} and \eqref{ERK_E_3_2} with $\mu=1$ and $r=r_1$ yields 
    \begin{align*}
        \Vert t_n e_n\Vert_1
        &\leq C t_n^{q/2+1/2}\Delta t^{\min(1+\gamma-\varepsilon,2)} + C\Delta t\sum_{k=1}^{n-1}t_{n-k}^{-1/2-r_1/2}t_k\|e_k\|_1
        \\ 
        &\quad + C \Delta t^{3+\sigma(r_1)/2} \sum_{k=1}^{n-1}t_{n-k}^{-{1}/2-r_1/2}t_k^{\gamma/2+\sigma(0)/2} + C\Delta t^{3}\sum_{k=1}^{n-1}t_{n-k}^{-1/2-r_1/2} t_k^{-1+\gamma/2+\sigma(r_1)/2}. 
    \end{align*}
    Substituting condition \eqref{the3_case2_condi} into the above estimate and employing \cite[Lemma 6.1]{IRK2} to bound the summation terms, we observe that the last two terms can be absorbed by the first term. Then applying the Gronwall inequality completes the proof.
\end{proof}

\begin{remark}
Combining Theorems \ref{thm:space_err} and \ref{the:tempo_err}, we derive the fully discrete error bounds \eqref{full_err} in both $L^2$ and $H^1$ norms. For context, prior studies \cite{10He,18_Burges} have shown that the second-order Crank–Nicolson/Adams–Bashforth scheme, when applied to two-dimensional Navier–Stokes and Burgers equations with $H^1$ initial data, attains temporal convergence orders of 1.5 and 1 in $L^2$ and $H^1$ norms, respectively. While direct comparisons must be interpreted with caution due to differing discretization formulations, our scheme demonstrates nominal convergence rate improvements of 1/2 order in the $L^2$ norm and 1 order in the $H^1$ norm. This discrepancy may stem from the comparatively weaker nonlinearity inherent in the Field–Noyes model relative to these convection-dominated systems. 
\end{remark}

\section{Numerical experiments}\label{sec:experiments}
In this section, we present numerical tests to support the theoretical analysis. 
We consider the Field-Noyes model \eqref{eq:eq1} with $a_1=a_2=a_3=1$, $\lambda = \delta = 0.1$, $\rho = 0.25$, and $c = 1$ 
in the unit square $\Omega = (0,1)\times (0,1)$ up to $T = 0.1$. 
The linear Galerkin finite element method \eqref{eq3} is used for spatial discretization, and the second-order exponential Runge-Kutta method \eqref{ERK_S_inner}-\eqref{ERK_S} for temporal discretization. 
By Theorems \ref{thm:space_err} and \ref{the:tempo_err}, we obtain the errors of the fully discrete scheme in both $L^2$ and $H^1$ norms: 
\begin{align}\label{full_err_d2}
    \Vert u(t_n)-u^h_n \Vert&\leq C t_n^{-1+\gamma/2}h^2 + C t_n^{\max(-1+\varepsilon,\: \gamma-2)/2} \Delta t^{\min(1+\gamma-\varepsilon,\:2)}, \quad 0<t_n\leq T,
    \\
    \Vert  u(t_n)-u^h_n \Vert_1&\leq C t_n^{-1+\gamma/2}h + C t_n^{\max(-1+\varepsilon,\: \gamma-2)/2-1/2} \Delta t^{\min(1+\gamma-\varepsilon,\:2)},\quad 0<t_n\leq T.
\end{align}
To illustrate the influence of initial value regularity on the convergence order, we separately consider the following initial values:
\begin{enumerate}[label=(\roman*).]
    \item $u_{0,1}(x_1,x_2)=0.5\:\text{sgn}(x_2-0.5)+0.5\in D(A^{1/4-\varepsilon})$.
    \item $u_{0,1}(x_1,x_2)=(x_1^2+x_2^2)^{-1/8}-0.8\in D(A^{3/8-\varepsilon})$.
    \item $u_{0,1}(x_1,x_2)=x^{1/2}_1 x_2\in D(A^{1/2-\varepsilon})$.
    \item $u_{0,1}(x_1,x_2)=|x_2-x_1|\in  D(A^{3/4-\varepsilon})$.
\end{enumerate}
Here, $\text{sgn}(x)$ denotes the sign function. In all numerical tests, we take    $u_{0,2}=u_{0,3}=u_{0,1}$.

\begin{table}[b]
	\centering
	\caption{Spatial discretization errors $\Vert u_{ref}(T)-u^h_N\Vert_\alpha\:(\alpha=0,1)$ and convergence orders for the Field-Noyes model with initial values (\rmnum{1})-(\rmnum{4}). Top: $L^2$ norm errors; bottom: $H^1$ norm errors. Theoretical convergence orders are listed in the last row.}
    \label{tab:spa_con_rate}
	\begin{tabular}{ccccccccccccc}
		\hline
		\multirow{2}{*}{$h$} & & \multicolumn{2}{c}{Initial data (\rmnum{1})} &&  \multicolumn{2}{c}{Initial data (\rmnum{2})}  && \multicolumn{2}{c}{Initial data (\rmnum{3}) } && \multicolumn{2}{c}{Initial data (\rmnum{4})}
		\\ \cline{3-4} \cline{6-7} \cline{9-10} \cline{12-13} 
		~& & Error & Order &&  Error & Order &&  Error & Order &&  Error & Order 
		\\ \hline
		$1/2^2$ & ~ & 1.652E-02 & -- & ~ & 5.599E-03 & -- & ~ & 8.649E-03 & -- & ~ & 1.187E-02 & -- \\ 
        $1/2^3$ & ~ & 3.995E-03 & 2.048  & ~ & 1.405E-03 & 1.994   & ~ & 2.157E-03 & 2.003  & ~ & 3.121E-03 & 1.926   \\
        $1/2^4$ & ~ & 9.506E-04 & 2.071  & ~ & 3.385E-04 & 2.054   & ~ & 5.182E-04 & 2.058  & ~ & 7.528E-04 & 2.052   \\
        -- & &-- & 2 & & --& 2 & &-- & 2 & & --& 2 \\
        \hline
        $1/2^2$ & ~ & 2.011E-01 & -- & ~ & 6.908E-02 & -- & ~ & 1.093E-01 & -- & ~ & 9.452E-02 & -- \\ 
        $1/2^3$ & ~ & 9.209E-02 & 1.127  & ~ & 3.169E-02 & 1.124   & ~ & 5.006E-02 & 1.127  & ~ & 4.280E-02 & 1.143   \\
        $1/2^4$ & ~ & 4.316E-02 & 1.093  & ~ & 1.485E-02 & 1.093   & ~ & 2.345E-02 & 1.094  & ~ & 1.968E-02 & 1.121   \\ 
        -- & & --& 1 & & --& 1 & & --& 1 & & --& 1 \\
        \hline
	\end{tabular}
\end{table}

We first compute the convergence orders in both space and time. The reference solution $u_{ref}(t)$ is obtained using a adequately fine spatial mesh with $h=1/64$ and an sufficiently small stepsize $\Delta t=1/40960$, ensuring reliable approximations to both the exact solution $u(t)$ and its semidiscrete counterpart $u^h(t)$. 
For spatial aspect, Table \ref{tab:spa_con_rate} presents the errors $\Vert u_{ref}(T)-u^h_N\Vert_\alpha\:(\alpha=0,1)$ and convergence orders, where the numerical solution $u^h_n$ is computed using identical temporal step sizes to the reference solution. 
For temporal aspect, Table \ref{tab:temp_con_rate} shows the errors $\Vert u_{ref}(T)-u^h_N\Vert_\alpha$ and corresponding convergence orders, where the spatial mesh of numerical solution is identical to the one used for the reference solution. 
Both experiments demonstrate the sharpness of the error estimates.

Furthermore, to illustrate the convergence behaviours near $t = 0$ where the weak singularity may arise, we show the errors $\Vert u_{ref}(t_1)-u^h_1\Vert$ and temporal
convergence orders in Table \ref{tab:uniform}. According to estimate \eqref{full_err_d2}, the first-step errors achieve convergence orders of $\max(-1+\varepsilon, \gamma-2)/2+{\min(1+\gamma-\varepsilon,\:2)}$ and $\max(-1+\varepsilon, \gamma-2)/2-1/2+{\min(1+\gamma-\varepsilon,\:2)}$ in the $L^2$ and $H^1$ norms, respectively. The numerical results show close agreement with theory for initial values (\rmnum{2})-(\rmnum{4}), but slightly higher rates for initial value (\rmnum{1}). 
This discrepancy can be explained by examining the nonlinearity estimates. 

Indeed, for estimating the Fréchet derivatives of the nonlinearity in \eqref{AhDufh} and \eqref{AhDuufh}, we employ pointwise multiplication relationships in fractional Sobolev spaces. 
When $s\in(-d/2, d/2)$, the power function $|x|^s$ satisfies these estimates, explaining the good agreement between theory and numerical results for initial data (\rmnum{2}) and (\rmnum{3}). However, for initial data (\rmnum{1}), where the generic case would expect $(0,1/2,1/2)\in \mathcal{B}$ but here we obtain $(1/2,1/2,1/2)$ due to the product of two sign functions remaining a sign function, this leads to improved estimates that account for the observed higher convergence rates.

\begin{table}[t]
	\centering
	\caption{Temporal discretization errors  $\Vert u_{ref}(T)-u^h_N\Vert_\alpha\:(\alpha=0,1)$ and convergence orders for the Field-Noyes  model with initial values (\rmnum{1})-(\rmnum{4}).  Top: $L^2$ norm errors; bottom: $H^1$ norm errors. Theoretical convergence orders are listed in the last row.}
    \label{tab:temp_con_rate}
    \begin{adjustbox}{max width=\textwidth}
	\begin{tabular}{ccccccccccccc}
		\hline
		\multirow{2}{*}{$N$} & & \multicolumn{2}{c}{Initial data (\rmnum{1})} &&  \multicolumn{2}{c}{Initial data (\rmnum{2})}  && \multicolumn{2}{c}{Initial data (\rmnum{3}) } && \multicolumn{2}{c}{Initial data (\rmnum{4})}
		\\ \cline{3-4} \cline{6-7} \cline{9-10} \cline{12-13} 
		~& & Error & Order &&  Error & Order &&  Error & Order &&  Error & Order 
		\\ \hline
		$2^5$ && 3.588E-04 & -- & ~ & 3.015E-05 & -- & ~ & 3.423E-05 & -- & ~ & 5.886E-05 & --  \\ 
        $2^6$ && 1.225E-04 & 1.551  & ~ & 8.525E-06 & 1.822 & ~ & 8.858E-06 & 1.950 & ~ & 1.513E-05 & 1.960 \\ 
        $2^7$ && 4.098E-05 & 1.579  & ~ & 2.436E-06 & 1.807 & ~ & 2.280E-06 & 1.958 & ~ & 3.855E-06 & 1.973 \\ 
        $2^8$ && 1.331E-05 & 1.622  & ~ & 6.988E-07 & 1.801 & ~ & 5.842E-07 & 1.965 & ~ & 9.742E-07 & 1.984 \\
        -- & &-- & $1.5-\varepsilon$ & &-- & $1.75-\varepsilon$ & &-- & $2-\varepsilon$ & &-- & $2-\varepsilon$ \\        
        \hline
        $2^5$ && 7.320E-04 & -- & ~ & 6.154E-05 & -- & ~ & 7.060E-05 & -- & ~ & 8.517E-05 & -- \\ 
        $2^6$ && 2.465E-04 & 1.570   & ~ & 1.827E-05 & 1.752   & ~ & 1.818E-05 & 1.957   & ~ & 2.204E-05 & 1.950   \\ 
        $2^7$ && 8.167E-05 & 1.593   & ~ & 5.463E-06 & 1.741   & ~ & 4.659E-06 & 1.965   & ~ & 5.638E-06 & 1.967   \\ 
        $2^8$ && 2.635E-05 & 1.632   & ~ & 1.632E-06 & 1.743   & ~ & 1.188E-06 & 1.971   & ~ & 1.428E-06 & 1.981   \\
        -- & & -- & $1.5-\varepsilon$ & & --& $1.75-\varepsilon$ & &-- & $2-\varepsilon$ & &-- & $2-\varepsilon$ \\ 
        \hline
	\end{tabular}
    \end{adjustbox}
\end{table}

\begin{table}[tbhp]
	\centering
	\caption{The first step errors $\Vert u_{ref}(t_1)-u^h_1\Vert_\alpha\:(\alpha=0,1)$ and convergence orders for the Field-Noyes  model with initial values (\rmnum{1})-(\rmnum{4}). Top: $L^2$ norm errors; bottom: $H^1$ norm errors. Theoretical convergence orders are listed in the last row.}
    \label{tab:uniform}
    \begin{adjustbox}{max width=\textwidth}
	\begin{tabular}{ccccccccccccc}
		\hline
		\multirow{2}{*}{$\Delta t$} & & \multicolumn{2}{c}{Initial data (\rmnum{1})} &&  \multicolumn{2}{c}{Initial data (\rmnum{2})}  && \multicolumn{2}{c}{Initial data (\rmnum{3}) } && \multicolumn{2}{c}{Initial data (\rmnum{4})}
		\\ \cline{3-4} \cline{6-7} \cline{9-10} \cline{12-13} 
		~& & Error & Order &&  Error & Order &&  Error & Order &&  Error & Order 
		\\ \hline
		$0.1/2^7$ & ~ & 3.334E-03 & \makebox[0.05\textwidth][c]{--} & ~ & 8.009E-04 &  \makebox[0.05\textwidth][c]{--}  & ~ & 1.591E-04 & \makebox[0.03\textwidth][c]{--} & ~ & 2.997E-04 &  \makebox[0.05\textwidth][c]{--}  \\ 
        $0.1/2^8$ & ~ & 1.409E-03 & 1.242  & ~ & 3.754E-04 & 1.093  & ~ & 5.010E-05 & 1.667  & ~ & 8.236E-05 & 1.864   \\ 
        $0.1/2^9$ & ~ & 5.788E-04 & 1.284  & ~ & 1.712E-04 & 1.133  & ~ & 1.625E-05 & 1.624  & ~ & 2.340E-05 & 1.815   \\ 
        $0.1/2^{10}$ & ~ & 2.312E-04 & 1.324  & ~ & 7.630E-05 & 1.166  & ~ & 5.413E-06 & 1.586  & ~ & 6.838E-06 & 1.775  \\
        -- & &-- & $1-\varepsilon$ & & --& $1.25-\varepsilon$ & & --& $1.5-\varepsilon$ & & -- & $1.75-\varepsilon$ \\ 
         \hline
        $0.1/2^7$ & ~ & 2.953E-02 & \makebox[0.05\textwidth][c]{--}   & ~ & 1.099E-02 & \makebox[0.05\textwidth][c]{--}   & ~ & 1.859E-03 &\makebox[0.05\textwidth][c]{--}   & ~ & 3.295E-03 & \makebox[0.05\textwidth][c]{--}    \\ 
        $0.1/2^8$ & ~ & 1.694E-02 & 0.802  & ~ & 7.236E-03 & 0.602  & ~ & 8.641E-04 & 1.105  & ~ & 1.402E-03 & 1.233   \\
        $0.1/2^9$ & ~ & 9.608E-03 & 0.818  & ~ & 4.652E-03 & 0.637  & ~ & 4.102E-04 & 1.075  & ~ & 5.961E-04 & 1.233   \\ 
        $0.1/2^{10}$ & ~ & 5.334E-03 & 0.849  & ~ & 2.932E-03 & 0.666  & ~ & 2.010E-04 & 1.029  & ~ & 2.561E-04 & 1.219   \\
        -- & & --& $0.5-\varepsilon$ & &-- & $0.75-\varepsilon$ & &-- & $1-\varepsilon$ & & --& $1.25-\varepsilon$ \\ 
        \hline
	\end{tabular}
    \end{adjustbox}
\end{table}

\section{Conclusions}
\label{sec:conclusions}
In this study, we have investigated the fully discrete error of the exponential Runge-Kutta Galerkin finite element method for the Field-Noyes model with nonsmooth initial data. 
Our analysis is developed within the fractional Sobolev spaces $H^s$ for $0\leq s\leq2$. 
We first extend the regularity of the discrete space from  $H^1$ to $H^{3/2-\varepsilon}$ and establish the norm equivalence $\Vert A^{s/2}\cdot \Vert\sim\Vert  A_h^{s/2}\cdot \Vert$ in $X_h$ for $-\frac{3}{2}<s<\frac{3}{2}$. 
Using pointwise multiplication techniques in fractional Sobolev spaces $H^s$, we derive corresponding results in fractional operator domains $D(A_h^{s/2})$, including dual cases. These developments enable us to prove the key analysis \eqref{localLips_discrete}-\eqref{AhDuufh}, which serve as the theoretical foundation for obtaining sharp estimates. Finally, employing semigroup theory, we derive fully discrete error estimates in Theorem \ref{thm:space_err} and \ref{the:tempo_err}. The numerical example also supports the theoretical result. 

Remarkably, the error analysis in this work rely solely on the nonlinearity estimates \eqref{localLips_discrete}-\eqref{AhDuufh} and the properties of the linear operator $A$, without requiring additional specifics of the Field-Noyes model. 
This demonstrates that our framework can be directly applied to a class of abstract semilinear parabolic equations. 
In future, we intend to generalize our analysis to encompass broader classes of nonlinear operators,  and establish precise assumptions for Fréchet differentiability of the nonlinearity, which will enable the derivation of sharp error estimates. 

Additionally, Theorem \ref{the:tempo_err} reveals that when the initial regularity $\gamma > 1$, the temporal convergence order reaches the method's theoretical upper bound. A fundamental open question concerns whether the $(1+\gamma)$-order convergence can be preserved when employing higher-order exponential Runge-Kutta methods --- this constitutes another focus of our future research.

\section*{Data availability}
No data was used for the research described in the article.

\section*{Acknowledgments}
This work was supported in part by the National Natural Science Foundation of China (No.12001115).

\appendix

\section{Technical Lemmas}\label{sec:appendix}
\begin{lemmaA}\label{lem:3dcase}
  Let $\Omega\subset \mathbb{R}^3$ be a convex polygonal domain. Discrete it by a finite triangulation $\pi_h$ with maximal length $h$. Assume that the triangulation $\pi_{h}$ is regular and satisfies an inverse condition in the sense that
  \begin{equation}
      c^{-1}h\leq2\rho_\sigma\leq 2r_\sigma\leq ch,\quad\forall\sigma\in\pi_h,
      \label{RTReg}
  \end{equation}
  where $\rho_\sigma$ and $r_\sigma$ are the radii of inscribed and circumscribed spheres of $\sigma$, respectively. 
  Let $V_h\subset H^1$ denote the space of continuous and piecewise linear functions over the triangulation $\pi_h$. 
  Then for $0\leq s<\frac32$, $V_h$ is continuously embedded in $H^s$ with the estimate
  \begin{equation}
      \|u_h\|_s\leq C_s h^{-s}\|u_h\|,\qquad \forall u_h\in V_h. \label{A.1_res}
  \end{equation}
\end{lemmaA}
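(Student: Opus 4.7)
The plan is to split the range $0 \le s < 3/2$ into three regimes, since continuous piecewise linear functions do not sit in $H^2(\Omega)$ globally, so the standard $H^0$–$H^2$ interpolation/inverse inequality route is unavailable for $s>1$. The regime $s=0$ is trivial, and for $s\in(0,1]$ the bound follows from a classical element-wise argument: on each simplex $\sigma\in\pi_h$ I pull back to a reference simplex $\widehat{\sigma}$, use equivalence of norms on the finite-dimensional reference space of affine functions, and use the regularity condition \eqref{RTReg} to control the Jacobians. This yields $|u_h|_{H^1(\sigma)}\le C h^{-1}\|u_h\|_{L^2(\sigma)}$, and summation gives $\|u_h\|_1\le Ch^{-1}\|u_h\|$. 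Real interpolation between $L^2$ and $H^1$ then produces \eqref{A.1_res} for every $s\in(0,1)$.

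The main work is the range $s\in(1,3/2)$. Here I will use the Slobodeckij characterization
\begin{equation*}
\|u_h\|_s^2 \sim \|u_h\|_1^2 + \sum_{i=1}^{d}\int_\Omega\!\!\int_\Omega \frac{|\partial_i u_h(x)-\partial_i u_h(y)|^2}{|x-y|^{d+2(s-1)}}\,dx\,dy,
\end{equation*}
together with the fact that $\nabla u_h$ is piecewise constant. This reduces the double integral to a sum over pairs of simplices,
\begin{equation*}
\sum_{\sigma_1\neq\sigma_2}\bigl|\nabla u_h|_{\sigma_1}-\nabla u_h|_{\sigma_2}\bigr|^2 \int_{\sigma_1}\!\!\int_{\sigma_2} \frac{dx\,dy}{|x-y|^{d+2(s-1)}}.
\end{equation*}
I would split pairs into those sharing a face and those separated by positive distance. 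For neighbouring pairs I bound each gradient by the classical inverse inequality $|\nabla u_h|_\sigma\le Ch^{-1-d/2}\|u_h\|_{L^2(\sigma)}$, and scale the singular integral on the reference patch; the singularity $|x-y|^{-d-2(s-1)}$ is integrable precisely because $s-1<1/2$, and dimensional counting yields a factor of order $h^{d-2(s-1)}$. Combining the two gives the contribution $\lesssim h^{-2s}\|u_h\|_{L^2(\sigma_1\cup\sigma_2)}^2$ from each neighbouring pair. For non-adjacent pairs I use $|x-y|\gtrsim\mathrm{dist}(\sigma_1,\sigma_2)$ and sum over dyadic distance shells, which again produces a bound of the same order. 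Summing over all pairs, using finite overlap/bounded valence of the triangulation, yields $|u_h|_s^2\le Ch^{-2s}\|u_h\|^2$, and combined with the $s=1$ inverse inequality this completes \eqref{A.1_res}.

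The main obstacle will be the neighbouring-pair estimate: verifying that the singular integral on the reference pair of face-sharing tetrahedra is finite (which is the place where the threshold $s<3/2$ enters), and then tracking the scaling constants uniformly via \eqref{RTReg}. A careful mapping to a common reference configuration and dimensional analysis of the double integral are what make the argument go through; once these are in place, the summation over elements is routine given the regularity of $\pi_h$.
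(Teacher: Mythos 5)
Your plan is essentially sound and, for $s\le 1$ and for the reduction of the $s\in(1,3/2)$ case to a pair sum over simplices via the Slobodeckij seminorm and the piecewise-constant gradient, it coincides with the paper. Where you genuinely diverge is in how the pair sum is controlled. The paper does not separate near and far pairs at all: it applies the crude bound $|u_{\sigma j}-u_{\sigma'j}|^2\le 2(|u_{\sigma j}|^2+|u_{\sigma'j}|^2)$ to \emph{every} pair, which collapses everything into the single estimate
\begin{equation*}
\iint_{\sigma\times(\Omega\setminus\sigma)}\frac{\mathrm{d}x\,\mathrm{d}y}{|x-y|^{3+2\epsilon}}\leq C_\epsilon r_\sigma^{3-2\epsilon},
\end{equation*}
and then proves this by an explicit computation, splitting $\Omega\setminus\sigma$ into the exterior of a dilated circumscribed ball (handled in spherical coordinates) and four slabs cut off by the face planes of $\sigma$ (handled by an iterated one-dimensional integral). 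Your near/far decomposition with reference-element scaling for touching pairs and dyadic distance shells for separated pairs reaches the same bound $C h^{-2s}\|u_h\|^2$; it replaces the paper's explicit integral evaluation by scaling and counting arguments, which is arguably more transparent about where the threshold $s<3/2$ enters (integrability of $|x-y|^{-3-2(s-1)}$ across a shared two-dimensional face), at the cost of needing quasi-uniformity and bounded valence more explicitly in the summation. The paper's route has the advantage that the only nontrivial fact is one clean geometric integral estimate per element.

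One point you must patch: your dichotomy ``sharing a face'' versus ``separated by positive distance'' is not exhaustive. Two tetrahedra may share only an edge or a vertex; they then have distance zero but are not face-neighbours, so they fall into neither of your classes. The fix is harmless --- treat all \emph{touching} pairs by your reference-scaling argument (the singular set is then of dimension $\le 2$, so the reference integral is still finite for $s<3/2$, and shape regularity bounds the number of touching neighbours of each element) --- but as written the case analysis has a hole. You should also record the standard fact that, under \eqref{RTReg}, non-touching elements satisfy $\mathrm{dist}(\sigma_1,\sigma_2)\gtrsim h$, which your dyadic summation implicitly uses.
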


\begin{proof}
  When $s=0$, \eqref{A.1_res} is trivial. When $s=1$, \eqref{A.1_res} is very standard (see, e.g., \cite[Theorem 3.2.6]{FEMoEP}). Consequently, the result follows immediately for $0\leq s\leq1$ by the interpolation technology and the estimate $\|u_h\|_{s}\leq C\|u_h\|_{1}^{s}\|u_h\|^{1-s}.$

It therefore suffices to consider the case where $s=1+\epsilon$, $0<\epsilon<\frac12$. We can estimate directly the $H^{1+\epsilon}$ norm of ${u}_h\in V_h$ using the fact that the $H^{1+\epsilon}$ norm is given by a sum of integrals over $\Omega\times\Omega$. Indeed, we have 
\begin{align*}
    \|u_h\|_{1+\epsilon}^{2}=\|u_h\|^{2}+\sum_{j=1}^{3}\iint_{\Omega\times\Omega}\frac{|\partial_{j}u_h(x)-\partial_{j}u_h(y)|^{2}}{|x-y|^{3+2\epsilon}}\text{d}x\text{d}y.
\end{align*}
Since ${u}_{\sigma j}=(\partial_j{u}_h)|_\sigma$ is constant in each $\sigma\in\pi_h$, it follows that
\begin{align*}
    \iint_{\Omega\times\Omega}\frac{|\partial_{j}u_h(x)-\partial_{j}u_h(y)|^{2}}{|x-y|^{3+2\epsilon}}\text{d}x\text{d}y 
    =\:&\sum_{\sigma,\sigma^{\prime}\in\pi_h,\sigma^{\prime}\neq\sigma}|{u}_{\sigma j}-{u}_{\sigma^{\prime}j}|^{2}\iint_{\sigma\times\sigma^{\prime}}\frac{\text{d}x\text{d}y}{|x-y|^{3+2\epsilon}}
    \\
    \leq\:&\sum_{\sigma,\sigma^{\prime}\in\pi_h}2\big(|{u}_{\sigma j}|^{2}+|{u}_{\sigma^{\prime}j}|^{2}\big)\iint_{\sigma\times\sigma^{\prime}}\frac{\text{d}x\text{d}y}{|x-y|^{3+2\epsilon}} 
    \\
    \leq\:& 4\sum_{\sigma\in\pi_h}|{u}_{\sigma j}|^{2}\iint_{\sigma\times(\Omega\setminus\sigma)}\frac{\text{d}x\text{d}y}{|x-y|^{3+2\epsilon}}.
\end{align*}
It is then sufficient to verify that
\begin{equation}\label{A.1_midres}
  \iint_{\sigma\times(\Omega\setminus\sigma)}\frac{\text{d}x\text{d}y}{|x-y|^{3+2\epsilon}}\leq C_\epsilon r_\sigma^{3-2\epsilon}.
\end{equation}
Since this, together with \eqref{RTReg}, implies that
  \begin{align*}
      \| u_{h}\|_{H^{1+\epsilon}}^{2}& \leq\|u_h\|^{2}+C_{\epsilon}\sum_{j}\sum_{\sigma}|{u}_{\sigma j}|^{2}r_{\sigma}^{3-2\epsilon}  \\
      &\leq\|u_h\|_{L_{2}}^{2}+C_{\epsilon}h^{-2\epsilon}\sum_{j}\sum_{\sigma}|{u}_{\sigma j}|^{2}|\sigma|\\
      &\leq C_{\epsilon}h^{-2\epsilon}\|u_h\|_1^{2}\leq C_\epsilon(h^{-1-\epsilon}\|u_h\|)^2,
  \end{align*}
where $|\sigma|$ denotes the volume of $\sigma$, and hence implies the desired estimate.

Let us now show estimate \eqref{A.1_midres}. Let $B_{\sigma}$ denote the circumscribed sphere of $\sigma$ with radius $r_\sigma$, and $U_{\sigma}$ the spherical domain with the same center as $B_{\sigma}$ but with radius $2r_{\sigma\cdot}$ Let $S_i$ for $i=1,2,3,4$ be the four planes in $\mathbb{R}^3$ obtained by extending the faces of $\sigma$. Each $S_i$ partitions $U_{\sigma}$ into two subsets, so denote by $V_i$ the one disjoint with $\sigma$. Then, we obviously see that
$$
    \iint_{\sigma\times(\Omega\setminus\sigma)}\frac{\text{d}x\text{d}y}{|x-y|^{3+2\epsilon}}\leq\iint_{B_\sigma\times(\mathbb{R}^3\setminus U_\sigma)}\frac{\text{d}x\text{d}y}{|x-y|^{3+2\epsilon}}+\sum_{i=1}^4\iint_{\sigma\times V_i}\frac{\text{d}x\text{d}y}{|x-y|^{3+2\epsilon}}.$$

  For the first integral, we have
  \begin{align*}
      &\quad\:\:\iint_{B_{\sigma}\times(\mathbb{R}^{3}\setminus U_{\sigma})}\frac{\text{d}x\text{d}y}{|x-y|^{3+2\epsilon}}\\
      & =\int_{0}^{r_{\sigma}}\int_{2r_{\sigma}}^{\infty}\int_{0}^{2\pi}\int_{0}^{2\pi}\int_{0}^{2\pi}\int_{0}^{2\pi}\frac{r_1^2 \sin{\varphi_1}r_2^2\sin{\varphi_2} \text{d}\varphi_1 \text{d}\varphi_2 \text{d}\theta_1 \text{d}\theta_2 \text{d}r_2 \text{d}r_1}{[r_1^{2}+r_2^{2}-2r_1 r_2 H(\varphi_1,\varphi_2,\theta_1,\theta_2)]^{\frac32+\epsilon}}
      \\
      &\leq Cr_{\sigma}^{3-2\epsilon}\int_{2}^{\infty}\int_{0}^{1}\frac{r_1^2 r_2^2 \text{d}r_1 \text{d}r_2}{(r_2-r_1)^{3+2\epsilon}}\leq C_{\epsilon}r_{\sigma}^{3-2\epsilon},
  \end{align*}
  where $H(\varphi_1,\varphi_2,\theta_1,\theta_2)=\sin{\varphi_1}\sin{\varphi_2}\cos{(\theta_1-\theta_2)}+\cos{\varphi_1}\cos{\varphi_2}$. We can verify that the function $H(\varphi_1,\varphi_2,\theta_1,\theta_2)$ has a maximum value of 1. 

  For the second integrals, we can assume, without loss of generality, that $\sigma\subset[-4r_\sigma,0] \times [0, 4r_\sigma]\times [0, 4r_\sigma]$, $V_i\subset [0,4r_\sigma] \times [0, 4r_\sigma]\times [0, 4r_\sigma]$, and $\partial\sigma \cap \partial V_i\subset \{ 0\} \times [0,4r_\sigma]\times [0,4r_\sigma]$.  Then, we obtain that
  \begin{align*}
      &\quad\:\:\iint_{\sigma\times V_{i}}\frac{\text{d}x\text{d}y}{|x-y|^{3+2\epsilon}}\\
      & \leq\int_{-4r_{\sigma}}^{0}\int_{0}^{4r_{\sigma}}\int_{0}^{4r_{\sigma}}\int_{0}^{4r_{\sigma}}\int_{0}^{4r_{\sigma}}\int_{0}^{4r_{\sigma}}\frac{\text{d}y_{3}\text{d}x_{3}\text{d}y_{2}\text{d}x_{2}\text{d}y_{1}\text{d}x_{1}}{(|x_{1}-y_{1}|^{2}+|x_{2}-y_{2}|^{2}+|x_{3}-y_{3}|)^{\frac32+\epsilon}} 
      \\
      &\leq C r_{\sigma}^{3-2\epsilon}\int_{-1}^{0}\int_{0}^{1}\int_{0}^{1}\int_{0}^{1}\biggl[\int_{0}^{2}\int_{-1}^{1}\frac{2dzdw}{(|x_{1}-y_{1}|^{2}+|x_{2}-y_{2}|^{2}+|z|^{2})^{\frac32+\epsilon}}\biggr]\text{d}y_{2}\text{d}x_{2}\text{d}y_{1}\text{d}x_{1} 
      \\
      &\leq Cr_{\sigma}^{3-2\epsilon}\int_{-1}^{0}\int_{0}^{1}\int_{0}^{1}\int_{0}^{1}\biggl[\int_{-(|x_{1}-y_{1}|^2+|x_{2}-y_{2}|^2)^{-\frac12}}^{(|x_{1}-y_{1}|^2+|x_{2}-y_{2}|^2)^{-\frac12}}\frac{\text{d}\zeta}{(1+\zeta^{2})^{\frac32+\epsilon}}\biggr]\\
      &\qquad\cdot\frac{\text{d}y_{2}\text{d}x_{2}\text{d}y_{1}\text{d}x_{1}}{((x_{1}-y_{1})^2+(x_{2}-y_{2})^2)^{1+\epsilon}} 
      \\
      &\leq C_{\epsilon}r_{\sigma}^{3-2\epsilon}\int_{-\infty}^{\infty}\frac{\text{d}\zeta}{(1+\zeta^{2})^{\frac32+\epsilon}}\times\int_{-1}^{0}\int_{0}^{1}\int_{0}^{1}\int_{0}^{1}\frac{\text{d}y_{2}\text{d}x_{2}\text{d}y_{1}\text{d}x_{1}}{((x_{1}-y_{1})^2+(x_{2}-y_{2})^2)^{1+\epsilon}} 
      \\
      &\leq C_{\epsilon}r_{\sigma}^{3-2\epsilon}\int_{-\infty}^{\infty}\frac{\text{d}\zeta}{(1+\zeta^{2})^{1+\epsilon}}\times\int_{-1}^{0}\int_{0}^{1}\frac{\text{d}y_{1}\text{d}x_{1}}{(x_{1}-y_{1})^{1+2\epsilon}} 
      \\
      &\leq C_\epsilon r_\sigma^{3-2\epsilon}.
  \end{align*}
  Hence, we have verified \eqref{A.1_midres}.
\end{proof}

\bibliographystyle{model1-num-names}
\bibliography{biob.bib}

\begin{thebibliography}{29}
\expandafter\ifx\csname natexlab\endcsname\relax\def\natexlab#1{#1}\fi
\providecommand{\url}[1]{\texttt{#1}}
\providecommand{\href}[2]{#2}
\providecommand{\path}[1]{#1}
\providecommand{\DOIprefix}{doi:}
\providecommand{\ArXivprefix}{arXiv:}
\providecommand{\URLprefix}{URL: }
\providecommand{\Pubmedprefix}{pmid:}
\providecommand{\doi}[1]{\href{http://dx.doi.org/#1}{\path{#1}}}
\providecommand{\Pubmed}[1]{\href{pmid:#1}{\path{#1}}}
\providecommand{\bibinfo}[2]{#2}
\ifx\xfnm\relax \def\xfnm[#1]{\unskip,\space#1}\fi
\bibitem[{Field and Noyes(1974)}]{FN74b}
\bibinfo{author}{R.~J. Field}, \bibinfo{author}{R.~M. Noyes},
\newblock \bibinfo{title}{{Oscillations in chemical systems. V. Quantitative
  explanation of band migration in the Belousov-Zhabotinskii reaction}},
\newblock \bibinfo{journal}{J. Am. Chem. Soc.} \bibinfo{volume}{96}
  (\bibinfo{year}{1974}) \bibinfo{pages}{2001--2006}.
\bibitem[{Yagi(2010)}]{ABsPEE}
\bibinfo{author}{A.~Yagi}, \bibinfo{title}{Abstract parabolic evolution
  equations and their applications}, \bibinfo{publisher}{Springer-Verlag Berlin
  Heidelberg}, \bibinfo{year}{2010}. \DOIprefix\doi{978-3-642-04631-5}.
\bibitem[{Hochbruck and Ostermann(2005{\natexlab{a}})}]{ERK1}
\bibinfo{author}{M.~Hochbruck}, \bibinfo{author}{A.~Ostermann},
\newblock \bibinfo{title}{Explicit exponential {Runge--Kutta} methods for
  semilinear parabolic problems},
\newblock \bibinfo{journal}{SIAM J. Numer. Anal.} \bibinfo{volume}{43}
  (\bibinfo{year}{2005}{\natexlab{a}}) \bibinfo{pages}{1069--1090}.
\bibitem[{Hochbruck and Ostermann(2005{\natexlab{b}})}]{ERK2}
\bibinfo{author}{M.~Hochbruck}, \bibinfo{author}{A.~Ostermann},
\newblock \bibinfo{title}{Exponential {Runge--Kutta} methods for parabolic
  problems},
\newblock \bibinfo{journal}{Appl. Numer. Math.} \bibinfo{volume}{53}
  (\bibinfo{year}{2005}{\natexlab{b}}) \bibinfo{pages}{323--339}.
\bibitem[{Luan and Ostermann(2014)}]{luan14stiff}
\bibinfo{author}{V.~T. Luan}, \bibinfo{author}{A.~Ostermann},
\newblock \bibinfo{title}{{Explicit exponential Runge--Kutta methods of high
  order for parabolic problems}},
\newblock \bibinfo{journal}{Journal of Computational and Applied Mathematics}
  \bibinfo{volume}{256} (\bibinfo{year}{2014}) \bibinfo{pages}{168--179}.
\bibitem[{Calvo and Palencia(2006)}]{EMult1}
\bibinfo{author}{M.~P. Calvo}, \bibinfo{author}{C.~Palencia},
\newblock \bibinfo{title}{A class of explicit multistep exponential integrators
  for semilinear problems},
\newblock \bibinfo{journal}{Numer. Math.} \bibinfo{volume}{102}
  (\bibinfo{year}{2006}) \bibinfo{pages}{367--381}.
\bibitem[{Hochbruck and Ostermann(2011)}]{EMult2}
\bibinfo{author}{M.~Hochbruck}, \bibinfo{author}{A.~Ostermann},
\newblock \bibinfo{title}{Exponential multistep methods of {Adams-type}},
\newblock \bibinfo{journal}{BIT} \bibinfo{volume}{51} (\bibinfo{year}{2011})
  \bibinfo{pages}{889--908}.
\bibitem[{Hochbruck et~al.(2009)Hochbruck, Ostermann, and Schweitzer}]{ERB1}
\bibinfo{author}{M.~Hochbruck}, \bibinfo{author}{A.~Ostermann},
  \bibinfo{author}{J.~Schweitzer},
\newblock \bibinfo{title}{Exponential {Rosenbrock-type} methods},
\newblock \bibinfo{journal}{SIAM J. Numer. Anal.} \bibinfo{volume}{47}
  (\bibinfo{year}{2009}) \bibinfo{pages}{786--803}.
\bibitem[{Luan and Ostermann(2016)}]{ERB2}
\bibinfo{author}{V.~T. Luan}, \bibinfo{author}{A.~Ostermann},
\newblock \bibinfo{title}{Parallel exponential {Rosenbrock} methods},
\newblock \bibinfo{journal}{Comput. Math. Appl.} \bibinfo{volume}{71}
  (\bibinfo{year}{2016}) \bibinfo{pages}{1137--1150}.
\bibitem[{Hochbruck and Ostermann(2010)}]{EI_hochbruck}
\bibinfo{author}{M.~Hochbruck}, \bibinfo{author}{A.~Ostermann},
\newblock \bibinfo{title}{Exponential integrators},
\newblock \bibinfo{journal}{Acta Numer.} \bibinfo{volume}{19}
  (\bibinfo{year}{2010}) \bibinfo{pages}{209--286}.
\bibitem[{Minchev and Wright(2005)}]{EI_review}
\bibinfo{author}{B.~V. Minchev}, \bibinfo{author}{W.~Wright},
\newblock \bibinfo{title}{A review of exponential integrators for first order
  semi-linear problems},
\newblock \bibinfo{journal}{Technical Report, Norwegian University of Science
  and Technology}  (\bibinfo{year}{2005}).
\bibitem[{Thom{\'e}e(2006)}]{GaleFEM}
\bibinfo{author}{V.~Thom{\'e}e}, \bibinfo{title}{Galerkin finite element
  methods for parabolic problems}, volume~\bibinfo{volume}{25},
  \bibinfo{edition}{2ed} ed., \bibinfo{publisher}{Springer Berlin, Heidelberg},
  \bibinfo{year}{2006}. \DOIprefix\doi{10.1007/3-540-33122-0}.
\bibitem[{Crouzeix and Thom{\'e}e(1987)}]{IRK1}
\bibinfo{author}{M.~Crouzeix}, \bibinfo{author}{V.~Thom{\'e}e},
\newblock \bibinfo{title}{On the discretization in time of semilinear parabolic
  equations with nonsmooth initial data},
\newblock \bibinfo{journal}{Math. Comp.} \bibinfo{volume}{49}
  (\bibinfo{year}{1987}) \bibinfo{pages}{359--377}.
\bibitem[{Lubich and Ostermann(1996)}]{IRK3}
\bibinfo{author}{C.~Lubich}, \bibinfo{author}{A.~Ostermann},
\newblock \bibinfo{title}{Runge-kutta time discretization of reaction-diffusion
  and navier-stokes equations: nonsmooth-data error estimates and applications
  to long-time behaviour},
\newblock \bibinfo{journal}{Applied numerical mathematics} \bibinfo{volume}{22}
  (\bibinfo{year}{1996}) \bibinfo{pages}{279--292}.
\bibitem[{Ostermann and Thalhammer(2000)}]{IRK2}
\bibinfo{author}{A.~Ostermann}, \bibinfo{author}{M.~Thalhammer},
\newblock \bibinfo{title}{Non-smooth data error estimates for linearly implicit
  {Runge-Kutta} methods},
\newblock \bibinfo{journal}{IMA J. Numer. Anal.} \bibinfo{volume}{20}
  (\bibinfo{year}{2000}) \bibinfo{pages}{167--184}.
\bibitem[{Mukam and Tambue(2018)}]{mukam}
\bibinfo{author}{J.~D. Mukam}, \bibinfo{author}{A.~Tambue},
\newblock \bibinfo{title}{A note on exponential {Rosenbrock-Euler} method for
  the finite element discretization of a semilinear parabolic partial
  differential equation},
\newblock \bibinfo{journal}{Comput. Math. Appl.} \bibinfo{volume}{76}
  (\bibinfo{year}{2018}) \bibinfo{pages}{1719--1738}.
\bibitem[{Wang et~al.(2024)Wang, Li, and Jin}]{24IMEX}
\bibinfo{author}{W.~Wang}, \bibinfo{author}{J.~Li}, \bibinfo{author}{C.~Jin},
\newblock \bibinfo{title}{{Nonsmooth data error estimates for fully discrete
  finite element approximations of semilinear parabolic equations in Banach
  space}},
\newblock \bibinfo{journal}{Journal of Computational and Applied Mathematics}
  \bibinfo{volume}{448} (\bibinfo{year}{2024}) \bibinfo{pages}{115939}.
\bibitem[{Li et~al.(2021)Li, Ma, and Wang}]{21NSH1}
\bibinfo{author}{B.~Li}, \bibinfo{author}{S.~Ma}, \bibinfo{author}{N.~Wang},
\newblock \bibinfo{title}{{Second-order convergence of the linearly
  extrapolated Crank--Nicolson method for the Navier--Stokes equations with
  $H^1$ initial data}},
\newblock \bibinfo{journal}{Journal of Scientific Computing}
  \bibinfo{volume}{88} (\bibinfo{year}{2021}) \bibinfo{pages}{70}.
\bibitem[{He(2008)}]{08He}
\bibinfo{author}{Y.~He},
\newblock \bibinfo{title}{{The Euler implicit/explicit scheme for the 2D
  time-dependent Navier-Stokes equations with smooth or non-smooth initial
  data}},
\newblock \bibinfo{journal}{Mathematics of Computation} \bibinfo{volume}{77}
  (\bibinfo{year}{2008}) \bibinfo{pages}{2097--2124}.
\bibitem[{He(2012)}]{10He}
\bibinfo{author}{Y.~He},
\newblock \bibinfo{title}{{The Crank-Nicolson/Adams-Bashforth scheme for the
  time-dependent Navier-Stokes equations with nonsmooth initial data}},
\newblock \bibinfo{journal}{Numerical Methods for Partial Differential
  Equations} \bibinfo{volume}{28} (\bibinfo{year}{2012})
  \bibinfo{pages}{155--187}.
\bibitem[{Li et~al.(2022)Li, Ma, and Ueda}]{22Li_L2}
\bibinfo{author}{B.~Li}, \bibinfo{author}{S.~Ma}, \bibinfo{author}{Y.~Ueda},
\newblock \bibinfo{title}{Analysis of fully discrete finite element methods for
  2d navier--stokes equations with critical initial data},
\newblock \bibinfo{journal}{ESAIM: Mathematical Modelling and Numerical
  Analysis} \bibinfo{volume}{56} (\bibinfo{year}{2022})
  \bibinfo{pages}{2105--2139}.
\bibitem[{Zhang et~al.(2018)Zhang, Jin, and HuangFu}]{18_Burges}
\bibinfo{author}{T.~Zhang}, \bibinfo{author}{J.~Jin},
  \bibinfo{author}{Y.~HuangFu},
\newblock \bibinfo{title}{{The Crank–Nicolson/Adams–Bashforth scheme for
  the Burgers equation with H2 and H1 initial data}},
\newblock \bibinfo{journal}{Applied Numerical Mathematics}
  \bibinfo{volume}{125} (\bibinfo{year}{2018}) \bibinfo{pages}{103--142}.
\bibitem[{Li and Ma(2021)}]{EMSV_high_order}
\bibinfo{author}{B.~Li}, \bibinfo{author}{S.~Ma},
\newblock \bibinfo{title}{A high-order exponential integrator for nonlinear
  parabolic equations with nonsmooth initial data},
\newblock \bibinfo{journal}{J. Sci. Comput.} \bibinfo{volume}{87}
  (\bibinfo{year}{2021}) \bibinfo{pages}{23}.
\bibitem[{Behzadan and Holst(2021)}]{multiplication21}
\bibinfo{author}{A.~Behzadan}, \bibinfo{author}{M.~Holst},
\newblock \bibinfo{title}{Multiplication in {Sobolev} spaces, revisited},
\newblock \bibinfo{journal}{Ark. Mat.} \bibinfo{volume}{59}
  (\bibinfo{year}{2021}) \bibinfo{pages}{275--306}.
\bibitem[{Pazy(1983)}]{SG_P}
\bibinfo{author}{A.~Pazy}, \bibinfo{title}{Semigroups of linear operators and
  applications to partial differential equations},
  \bibinfo{publisher}{Springer-Verlag New York}, \bibinfo{year}{1983}.
  \DOIprefix\doi{10.1007/978-1-4612-5561-1}.
\bibitem[{Brenner(2008)}]{TMToFEM}
\bibinfo{author}{S.~C. Brenner}, \bibinfo{title}{The mathematical theory of
  finite element methods}, \bibinfo{edition}{3rd} ed.,
  \bibinfo{publisher}{Springer New York, NY}, \bibinfo{year}{2008}.
  \DOIprefix\doi{10.1007/978-0-387-75934-0}.
\bibitem[{Ciarlet(2002)}]{FEMoEP}
\bibinfo{author}{P.~G. Ciarlet}, \bibinfo{title}{The finite element method for
  elliptic problems}, \bibinfo{publisher}{SIAM}, \bibinfo{year}{2002}.
  \DOIprefix\doi{10.1137/1.9780898719208}.
\bibitem[{Nakaguchi and Atsushi(2002)}]{NY02}
\bibinfo{author}{E.~Nakaguchi}, \bibinfo{author}{Y.~Atsushi},
\newblock \bibinfo{title}{Fully discrete approximation by {Galerkin
  Runge-Kutta} methods for quasilinear parabolic systems},
\newblock \bibinfo{journal}{Hokkaido Math. J.} \bibinfo{volume}{31}
  (\bibinfo{year}{2002}) \bibinfo{pages}{385--429}.
\bibitem[{Lord and Tambue(2013)}]{LPSDE}
\bibinfo{author}{G.~J. Lord}, \bibinfo{author}{A.~Tambue},
\newblock \bibinfo{title}{Stochastic exponential integrators for the finite
  element discretization of spdes for multiplicative and additive noise},
\newblock \bibinfo{journal}{IMA J. Numer. Anal.} \bibinfo{volume}{33}
  (\bibinfo{year}{2013}) \bibinfo{pages}{515--543}.

\end{thebibliography}

\end{document}